\documentclass[12pt,a4paper,reqno]{amsart}

\usepackage[a4paper,margin=23mm,bottom=23mm,top=26mm]{geometry}
\usepackage{amsmath,amssymb,amsthm,amscd}
\usepackage{mathtools}
\usepackage[T1]{fontenc}
\usepackage{newtxtext,newtxmath}
\usepackage[colorlinks=true,linkcolor=cyan,citecolor=magenta,urlcolor=blue]{hyperref}
\usepackage[all,ps]{xypic}
\usepackage{tikz,tikz-cd}
\usepackage{enumerate}

\newtheorem{theorem}{Theorem}[section]
\newtheorem{proposition}[theorem]{Proposition}
\newtheorem{lemma}[theorem]{Lemma}

\theoremstyle{definition}
\newtheorem{definition}[theorem]{Definition}
\newtheorem{remark}[theorem]{Remark}

\newcommand{\Hom}{\mathrm{Hom}}

\newcommand{\Tor}{\mathrm{Tor}}

\newcommand{\Vect}{\mathbf{Vect}}

\newcommand{\OrdFor}{\mathbf{OrdFor}}
\newcommand{\Ch}{\mathbf{Ch}}

\newcommand{\ho}{\mathrm{ho}}
\newcommand{\tr}{\tau}
\newcommand{\op}{\mathrm{op}}
\newcommand{\id}{\mathrm{id}}
\newcommand{\inj}{\mathrm{inj}}
\newcommand{\sur}{\mathrm{sur}}

\newcommand{\lmod}[1]{{#1}\text{-}\mathbf{Mod}}
\newcommand{\rmod}[1]{\mathbf{Mod}\text{-}{#1}}

\numberwithin{equation}{section}

\title{Combinatorial Models for Linear Homotopy Theories}
\author{Atabey Kaygun}
\address{Istanbul Technical University, Istanbul}
\email{kaygun@itu.edu.tr}

\begin{document}
\maketitle

\begin{abstract}
  For a field $k$ of characteristic $0$, we compare $k$-linear chain complexes, semisimplicial
  vector spaces, augmented semisimplicial vector spaces, semicubical vector spaces, and arboreal
  vector spaces through small differential categorical algebras.  We prove that semisimplicial
  modules and augmented semisimplicial modules are equivalent to appropriate chain-complex homotopy
  theories, both at the Gabriel--Zisman localization and the Quillen model-categorical level.  The
  semicubical sign embedding gives a natural comparison from semicubical modules to augmented
  semisimplicial modules and induces a Quillen adjunction, but not a Quillen equivalence on the
  full semicubical category since there is an obstruction in augmented homology at degree $-1$.
\end{abstract}

\section*{Introduction}

We compare several $k$-linear combinatorial models for $k$-linear homotopy theories.  The main
models are chain complexes, semisimplicial vector spaces, augmented semisimplicial vector spaces,
semicubical vector spaces, and arboreal vector spaces.  The comparison has two different
characters.  Chain complexes, semisimplicial modules, augmented semisimplicial modules, and
arboreal modules give equivalent homotopy theories in the appropriate degrees.  The semicubical
model, by contrast, admits a natural comparison with the augmented semisimplicial model, but the
comparison is not an equivalence on the full semicubical category.

The present paper separates these two phenomena.  For semisimplicial modules, restriction along
the differential categorical algebra $\Omega$ identifies the relevant homology with the homology
of a nonnegatively graded chain complex.  This gives both a Gabriel--Zisman equivalence with
$\Ch_{\geq 0}(k)$ and a Quillen equivalence after transferring the standard model structure.
For augmented semisimplicial modules, the analogous algebra is $\Omega_a$, obtained from
$\Omega$ by adjoining the object $[-1]$ and the differential $d_0\colon [-1]\to [0]$.  The
full augmented chain functor identifies the localized homotopy theory with
$\Ch_{\geq -1}(k)$, again both after Gabriel--Zisman localization and at the Quillen level.

The semicubical theory is different.  We use the sign embedding
$v\colon k[\Delta_{a,\inj}]\to k[\square_{\inj}]$, defined on cofaces by
$v(\delta^i)=\delta^1_{i+1}-\delta^0_{i+1}$.  Restriction along $v$ turns a semicubical module
into an augmented semisimplicial module whose augmented chain complex is the shifted semicubical
sign complex.  Thus the functor $v^\ast$ detects the semicubical weak equivalences.  However,
the induction--restriction adjunction $v_!\dashv v^\ast$ is not an equivalence after
localization: the unit can fail to preserve the degree $-1$ augmented homology.  Equivalently,
the semicubical sign model maps naturally to the augmented semisimplicial homotopy theory, but it
does not present the same homotopy theory on the full category.

The arboreal part of the comparison is imported from \cite{Kaygun2026}.  In that work, the
category $\OrdFor$ of ordered forests is shown to compare with the augmented semisimplicial
model through the height-one shadow functor, using the shifted identification
$\Delta_{\mathrm{epi}}^{\op}\cong \Delta_{a,\inj}$.  The role of the present paper is not to
reprove those arboreal results, but to place them in the same diagram as the semisimplicial,
augmented semisimplicial, chain-complex, and semicubical comparisons.  The resulting picture has
two genuine equivalence lines, namely the chain--semisimplicial and chain--augmented
semisimplicial lines, together with an arboreal equivalence imported from \cite{Kaygun2026} and a
semicubical comparison functor which is not an equivalence.

The simplicial and augmented simplicial background is classical.  The ordinary Dold--Kan
correspondence is standard; see \cite[Chapter~III, \S2]{WeibelHA} and
\cite[Chapter~II, \S5]{Quillen1967HomotopicalAlgebra}.  Augmented homology appears already in
\cite{Fors2001AugmentalHomology}.  Dold--Kan type equivalences have also been placed in a broader
combinatorial framework by Lack and Street \cite{LackStreet2015}, and the classical comparison has
recently been reinterpreted in crossed simplicial terms by Kaya and Kaygun
\cite{KayaKaygun2024}.  A modern occurrence of the intermediate semisimplicial category
$\Delta_{\inj}$ appears in work of Brantner, Hahn, and Knudsen
\cite{BrantnerHahnKnudsen2024}, where the classical Quillen equivalence is factored through
semisimplicial modules.  The semisimplicial comparison used here is the injective form of the
$\Omega$-semisimplicial comparison appearing in \cite{KayaKaygun2024}.

The model-categorical input is supplied by induced model structures.  The chain-complex categories
$\Ch_{\geq 0}(k)$ and $\Ch_{\geq -1}(k)$ carry the standard model structures in which weak
equivalences are quasi-isomorphisms, cofibrations are degreewise monomorphisms, and fibrations are
degreewise epimorphisms.  We transfer these model structures along the comparison functors from
$\Omega$ and $\Omega_a$, using freeness over the corresponding differential categorical
algebras.  This produces Quillen equivalences for the semisimplicial and augmented
semisimplicial models.  We also transfer the augmented semisimplicial model structure along the
semicubical sign embedding.  The transferred semicubical model structure has the expected weak
equivalences, but the resulting Quillen adjunction is not a Quillen equivalence.  The general
technology of induced model structures is standard; see
Hess--K\k{e}dziorek--Riehl--Shipley \cite{HessKedziorekRiehlShipley2017}.  The criterion used
below is stated in the form needed for our $k$-linear diagrammatic setting.

Semisimplicial and semicubical objects have long served as alternatives to simplicial objects.
Semisimplicial objects are indexed by the injective simplex category.  In the augmented setting,
the shifted duality $\Delta_{\mathrm{epi}}^{\op}\cong \Delta_{a,\inj}$ makes the surjective
simplex category equally natural, especially for the arboreal height-one shadow.  On the cubical
side, Brown and Higgins studied the algebra of cubes in \cite{BrownHiggins1981AlgebraOfCubes} and
proved that cubical abelian groups with connections are equivalent to chain complexes in
\cite{BrownHiggins2003CubicalAbelian}.  Grandis and Mauri developed the cubical site with
connections and interchanges in \cite{GrandisMauri2003}, while Lack and Street placed the cubical
category of Crans and Verity inside a general theory of Dold--Kan type combinatorial equivalences
\cite{LackStreet2015}.  The semicubical theory studied here is deliberately smaller: it has no
degeneracies, no connections, and no interchanges.  Its comparison with augmented semisimplicial
objects is governed only by the signed difference of the two cubical coface families.

The tree-shaped background points in a related but distinct direction.  Trees have been central to
operad theory since Boardman and Vogt \cite{BoardmanVogt1973} and remain the standard
combinatorics for free nonsymmetric operads; see \cite[Chapter~5]{MarklShniderStasheff2002} and
\cite[Chapter~4]{Leinster2004}.  Moerdijk and Weiss introduced dendroidal sets by replacing
$\Delta$ with a category of trees in order to model operads and homotopy operads
\cite{MoerdijkWeiss2007}, and the resulting homotopy theory was developed further by Cisinski and
Moerdijk \cite{CisinskiMoerdijk2011}; see also
\cite{Weiss2011FromOperadsToDendroidalSets}.  For planar trees, Guti\'errez, Luk\'acs, and Weiss
proved a Dold--Kan type correspondence for dendroidal abelian groups
\cite{GutierrezLukacsWeiss2011}.  The category $\OrdFor$ used in \cite{Kaygun2026} is not a
dendroidal category in this sense.  Its objects are finite ordinals, its morphisms are ordered
forests, and its quotient to $\Delta_{\mathrm{epi}}^{\op}$ remembers only the height-one linear
shadow.  Thus the arboreal comparison used here is closer to a planar many-output extension of
semisimplicial combinatorics than to a replacement for the dendroidal encoding of operadic
substitution.

\subsection*{Structure of the paper}

Section~\ref{sec:indexing-categories} introduces the indexing categories and the functors relating
the semisimplicial, augmented semisimplicial, semicubical, and arboreal settings.
Section~\ref{sec:categorical-algebra} passes to the $k$-linear setting: it introduces the
differential categorical algebras $\Omega$ and $\Omega_a$, defines the comparison functors,
and proves the freeness statements needed for change of rings.  Section~\ref{sec:homological-algebra}
defines the weak equivalences.  The nonaugmented semisimplicial and semicubical theories are
controlled by nonnegative chain homology, while the augmented semisimplicial theory is controlled
by the full augmented chain complex, equivalently by good truncation together with the degree
$-1$ invariant.  Section~\ref{sec:gz-localization} proves the corresponding
Gabriel--Zisman localization statements.  The semisimplicial and augmented semisimplicial
comparisons are equivalences; the semicubical sign-shadow comparison is a functor to the
augmented semisimplicial localization but is not an equivalence on the full semicubical category.
Finally, Section~\ref{sec:quillen-equivalences} constructs the corresponding right-induced model
structures.  It proves the semisimplicial and augmented semisimplicial Quillen equivalences,
constructs the semicubical Quillen adjunction, and incorporates the arboreal Quillen equivalence
from \cite{Kaygun2026} into the final comparison diagram.

\section{Indexing categories}\label{sec:indexing-categories}

We follow May~\cite[Chapter~1]{May1967} for the simplex category, Ebert--Randal-Williams~\cite[\S1.1]{EbertRandalWilliams2019}
for the augmented and semisimplicial variants, and Grandis--Mauri~\cite[\S1]{GrandisMauri2003} for the cubical category.

For each integer $n\geq 0$ write $[n]:=\{0<1<\cdots<n\}$, and set $[-1]:=\varnothing$.  The
\emph{augmented simplex category} $\Delta_a$ has objects $[n]$ for $n\geq -1$ and order-preserving
maps as morphisms; its full subcategory on $[n]$ for $n\geq 0$ is the usual simplex category
$\Delta$.  For $0\leq i\leq n$ we denote by $\delta^i\colon [n-1]\to [n]$ the injective
order-preserving map omitting $i$, and by $\sigma^i\colon [n+1]\to [n]$ the surjective
order-preserving map with $\sigma^i(i)=\sigma^i(i+1)=i$.  We write $\Delta_{a,\inj}$, $\Delta_{a,\sur}$,
$\Delta_{\inj}$, $\Delta_{\sur}$ for the wide subcategories generated by the corresponding classes.
The cofaces and codegeneracies obey the standard relations
\begin{equation}\label{eq:semisimplicial-coface-relations}
  \delta^j\delta^i=\delta^i\delta^{j-1}\quad (i<j),
  \qquad
  \sigma^j\sigma^i=\sigma^i\sigma^{j+1}\quad (i\leq j).
\end{equation}

The category $\OrdFor$ of \cite{Kaygun2026} is a planar many-output extension of semisimplicial
combinatorics.  Its objects are finite ordinals; morphisms are represented by ordered planar
forests whose minimal and maximal vertices are identified with the source and target ordinals, and
composition is grafting.  The height-one shadow functor
\[
  \pi\colon \OrdFor\longrightarrow \Delta_{\mathrm{epi}}^{\op}
\]
is full; after the standard shift $\Delta_{\mathrm{epi}}^{\op}\cong \Delta_{a,\inj}$ it is the
bridge to the augmented semisimplicial theory.  We refer to \cite{Kaygun2026} for the detailed
combinatorics.

On the cubical side, write $\square_n:=[1]^n$ for $n\geq 0$ with $\square_0=\{\ast\}$.  The cubical
category $\square$ is generated by the coface maps
\[
  \delta_i^\varepsilon\colon \square_{n-1}\to \square_n,
  \qquad \varepsilon\in\{0,1\},\ 1\leq i\leq n,
\]
and the codegeneracy maps $\sigma_i\colon \square_n\to \square_{n-1}$, with
$\delta_i^\varepsilon(x_1,\dots,x_{n-1})=(x_1,\dots,x_{i-1},\varepsilon,x_i,\dots,x_{n-1})$ and
$\sigma_i(x_1,\dots,x_n)=(x_1,\dots,\widehat{x_i},\dots,x_n)$.  The face maps satisfy
\begin{equation}\label{eq:cubical-face-relations}
  \delta_j^\eta\delta_i^\varepsilon=\delta_i^\varepsilon\delta_{j-1}^\eta
  \qquad (i<j,\ \varepsilon,\eta\in\{0,1\}).
\end{equation}
We write $\square_{\inj}$ and $\square_{\sur}$ for the corresponding wide subcategories.

Forgetting face colors and shifting objects defines a canonical quotient functor
\begin{equation}\label{eq:cubical-quotient-to-augmented-semisimplicial}
  q\colon \square_{\inj}\to \Delta_{a,\inj},\qquad
  q(\square_n)=[n-1],\quad q(\delta_i^\varepsilon)=\delta^{i-1};
\end{equation}
equivalently, $\Delta_{a,\inj}$ is obtained from $\square_{\inj}$ by imposing
$\delta_i^0=\delta_i^1$.

\section{The algebras $\Omega$, $\Omega_a$, and comparison functors}
\label{sec:categorical-algebra}

Fix a field $k$. For a small category $\mathcal C$, its \emph{$k$-linearization}
$k[\mathcal C]$ is the $k$-linear category with the same objects and with
$k[\mathcal C](x,y):=k\langle \Hom_{\mathcal C}(x,y)\rangle$, composition extended
bilinearly. Following Mitchell~\cite{Mitchell1972}, we regard $k[\mathcal C]$ as a locally
unital algebra. The purpose of this section is to isolate the differential categorical algebras
that encode the chain complexes used below, to define the comparison functors from these algebras
to the semisimplicial, augmented semisimplicial, and semicubical indexing algebras, and to record
the freeness properties needed for change of rings.

\subsection{The differential categorical algebras $\Omega$ and $\Omega_a$}
\label{subsec:omega}

Let $\Omega$ be the $k$-linear category with objects $[n]$ for $n\geq 0$, generated by
arrows $d_n\colon [n-1]\to [n]$ for $n\geq 1$, subject only to the relations
$d_{n+1}d_n=0$. A right $\Omega$-module is exactly a nonnegatively graded chain complex, with
differential induced by the maps $X(d_n)\colon X_n\to X_{n-1}$. Thus
\begin{equation}\label{eq:omega-mod-chain}
  \rmod{\Omega}=\Ch_{\geq 0}(k).
\end{equation}

We write $k[0]\in \lmod{\Omega}$ for the simple left $\Omega$-module supported at $[0]$.
We also write $k_\bullet\in \lmod{\Omega}$ for the restriction along $u_\Delta$ of the
constant left $k[\Delta_{\inj}]$-module. Explicitly, $k_\bullet([n])=k$ for all $n\geq 0$,
and $k_\bullet(d_n)$ is multiplication by $\sum_{i=0}^n(-1)^i$. By
\cite[Proposition~2.1]{KayaKaygun2024}, the natural map $k[0]\to k_\bullet$ is a
quasi-isomorphism. Consequently, for every $C_\bullet\in \rmod{\Omega}$,
\begin{equation}\label{eq:omega-tor-k0-k-constant}
  \Tor_n^\Omega(C_\bullet,k[0])\cong \Tor_n^\Omega(C_\bullet,k_\bullet)\cong H_n(C_\bullet)
  \qquad (n\geq 0).
\end{equation}

Let $\Omega_a$ be obtained from $\Omega$ by adjoining one new object $[-1]$, one new
generator $d_0\colon [-1]\to [0]$, and one new relation $d_1d_0=0$. Then a right
$\Omega_a$-module is exactly a chain complex concentrated in degrees $\geq -1$, and hence
\begin{equation}\label{eq:omega-a-mod-chain}
  \rmod{\Omega_a}=\Ch_{\geq -1}(k).
\end{equation}

\subsection{Comparison functors}\label{subsec:comparison-functors}

The semisimplicial, augmented semisimplicial, and semicubical indexing algebras receive
canonical differential comparison functors. On generators they are
\begin{align}
  u_\Delta\colon \Omega &\longrightarrow k[\Delta_{\inj}],
  & u_\Delta(d_n)&=\sum_{i=0}^n(-1)^i\delta^i,
  \label{eq:u-delta}\\
  u_a\colon \Omega_a &\longrightarrow k[\Delta_{a,\inj}],
  & u_a(d_n)&=\sum_{i=0}^n(-1)^i\delta^i,
  \label{eq:u-augmented}\\
  u_\square\colon \Omega &\longrightarrow k[\square_{\inj}],
  & u_\square(d_n)&=\sum_{i=1}^n(-1)^{i-1}\bigl(\delta_i^1-\delta_i^0\bigr).
  \label{eq:u-square}
\end{align}
The relations \eqref{eq:semisimplicial-coface-relations} and
\eqref{eq:cubical-face-relations} imply $u_?(d_{n+1})u_?(d_n)=0$, so the assignments define
$k$-linear functors.

There are also three $k$-linear functors
\begin{equation}\label{eq:j0-j1-v}
  j^0,j^1,v\colon k[\Delta_{a,\inj}]\longrightarrow k[\square_{\inj}]
\end{equation}
sending $[n]$ to $\square_{n+1}$ and satisfying
\[
  j^0(\delta^i)=\delta^0_{i+1},\qquad
  j^1(\delta^i)=\delta^1_{i+1},\qquad
  v(\delta^i)=\delta^1_{i+1}-\delta^0_{i+1}.
\]
The functor $v$ is the \emph{cubical sign embedding}. On generators, $v\circ u_a=u_\square$
after restricting $u_a$ along $\Omega\hookrightarrow\Omega_a$.

\subsection{Freeness}\label{subsec:freeness}

The next results supply the exactness input for the induction--restriction arguments used later.

\begin{lemma}\label{lem:ssimp-free-over-omega}
  The $k$-linear category $k[\Delta_{\inj}]$ is free as a left and as a right
  $u_\Delta(\Omega)$-module.
\end{lemma}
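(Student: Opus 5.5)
The plan is to unwind what freeness means for modules over $\Omega$, and to reduce both halves of Lemma~\ref{lem:ssimp-free-over-omega} to (co)homology computations on the simplicial chains of standard simplices.

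\textbf{Step 1: free modules over $\Omega$.} I first describe the representable (free rank-one) $\Omega$-modules.

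On the right, $\Omega(-,[p])$ is nonzero only at $[p]$ (spanned by $\id$) and at $[p-1]$ (spanned by $d_p$) for $p\geq 1$. Under \eqref{eq:omega-mod-chain} it is therefore the disk complex $k\xrightarrow{\ \id\ }k$ in degrees $p,p-1$. For $p=0$ it is $k[0]$ concentrated in degree $0$.

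On the left, $\Omega([p],-)$ is nonzero only at $[p]$ and $[p+1]$, joined by $d_{p+1}$. So every left representable is a two-term disk, and no sphere occurs.

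Since $k$ is a field, a right $\Omega$-module is free exactly when it is a direct sum of disks and copies of $k[0]$. This holds as soon as its homology is concentrated in degree $0$: choose complements of the cycles and of the boundaries and split degreewise. Likewise, a left $\Omega$-module, i.e.\ a cochain complex $M([0])\to M([1])\to\cdots$, is free exactly when it is acyclic, because it is bounded below and splits into disks.

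\textbf{Step 2: right freeness.} Fix a target $[m]$. The right module $k[\Delta_{\inj}](u_\Delta(-),[m])$ is the complex
\[
  [n]\mapsto k\langle \Delta_{\inj}([n],[m])\rangle ,
\]
with differential given by precomposition with $\sum_i(-1)^i\delta^i$. This is exactly the normalized simplicial chain complex of the standard simplex $\Delta^m$: injections $[n]\to[m]$ are the $(n+1)$-element subsets of $[m]$. Its homology is $k$ in degree $0$ and vanishes elsewhere, since $\Delta^m$ is a cone on the vertex $0$. An explicit contracting homotopy in positive degrees is $S\mapsto \{0\}\cup S$ for $0\notin S$ and $S\mapsto 0$ otherwise. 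By Step 1, the module is then a free module with one copy of $k[0]$ plus disks. Summing over $m$ gives right freeness.

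\textbf{Step 3: left freeness.} Fix a source $[n]$. The left module is the cochain complex
\[
  [m]\mapsto k\langle \Delta_{\inj}([n],[m])\rangle \qquad (m\geq n),
\]
with coboundary given by postcomposition with $\sum_i(-1)^i\delta^i$. In terms of complements, postcomposition with $\delta^i$ inserts a new point at position $i$ into the target and shifts the image. I will show acyclicity with the following homotopy: $h(f)=f'$ if $0\notin\operatorname{im}f$, where $f=\delta^0f'$, and $h(f)=0$ otherwise. I then verify $dh+hd=\id$ using the relations \eqref{eq:semisimplicial-coface-relations}. The terms with $i\geq1$ in $d$ cancel against $h$ because $\delta^i\delta^0=\delta^0\delta^{i-1}$ for $i\geq 1$, and the $i=0$ term reproduces $f$. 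The bottom degree $m=n$ also needs care, since there $f=\id$ and $0\in\operatorname{im}f$, so $h(f)=0$ and the identity must come entirely from $hd$.

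This sign and boundary-case check is the step I expect to be the main obstacle. If the naive homotopy fails by a sign or at $m=n$, I would instead identify the complex with a Koszul/cone-type complex on the set of omitted points and argue exactness by a filtration on whether $0$ lies in the image. Once acyclicity holds, Step 1 gives a decomposition into left disks $\Omega([p],-)$, and summing over $n$ yields left freeness.
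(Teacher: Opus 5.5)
Your proof is correct, and it takes a genuinely different route from the paper's.

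The paper argues by an explicit normal form. It introduces the partial alternating sums $d_{i,n}=\sum_{j=i}^{n}(-1)^j\delta^j$ and uses the identity $d_{i,n+1}d_{i,n}=0$ to rewrite strictly decreasing $\delta$-monomials as strictly decreasing $d$-monomials. A cardinality count then shows these form a basis. It reads off the free generators $B$ as the monomials whose last index is positive, so that $k[\Delta_{\inj}]=\mathrm{span}_k(B)\otimes_k u_\Delta(\Omega)$. The left-module half is only asserted to follow ``with the opposite canonical form''.

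You instead characterize free $\Omega$-modules homologically over a field: a right module is free iff its homology is concentrated in degree $0$, and a left module is free iff it is acyclic. This reduces the two halves to contractibility of the simplicial chains of $\Delta^m$ and of the cochain complex of injections out of $[n]$.

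The step you flagged goes through with your naive $h$, including at $m=n$, so no fallback is needed:
\begin{enumerate}[(1)]
\item If $0\in\operatorname{im}f$, then $\delta^i f$ still hits $0$ for $i\geq 1$, so $(dh+hd)(f)=h(\delta^0 f)=f$.
\item If $f=\delta^0 f'$, then $h(\delta^i\delta^0 f')=h(\delta^0\delta^{i-1}f')=\delta^{i-1}f'$ for $i\geq 1$. Together with $h(\delta^0\delta^0 f')=f$, this gives $hd(f)=f-d(f')=f-dh(f)$.
\end{enumerate}

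The paper's route produces a canonical, explicit set of free generators by pure combinatorics. Yours is shorter and actually verifies the left-module case rather than appealing to symmetry. It also explains why each target $[m]$ contributes exactly one copy of $\Omega(-,[0])$, namely $H_0(\Delta^m)=k$; this shows directly that $u_{\Delta,!}k[0]\cong k_\bullet$. The cost is a non-canonical splitting into disks, which relies on choosing complements over the field $k$.
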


\begin{proof}
  This is the injective-only form of the change-of-basis argument of
  \cite[\S2.1]{KayaKaygun2024}. We prove the right-module statement; the left-module statement is
  obtained by the same argument with the opposite canonical form.

  For $0\leq i\leq n$, set
  $d_{i,n}:=\sum_{j=i}^{n}(-1)^j\delta^j\in k[\Delta_{\inj}]([n-1],[n])$. Thus
  $d_{0,n}=u_\Delta(d_n)$, and the key identity is $d_{i,n+1}d_{i,n}=0$ for every
  $0\leq i\leq n$, as in \cite[\S2.1]{KayaKaygun2024}. Since
  $\delta^i=(-1)^i(d_{i,n}-d_{i+1,n})$, induction on monomial length rewrites every
  strictly decreasing $\delta$-monomial as a $k$-linear combination of strictly decreasing
  $d$-monomials of the same length. The only noncanonical terms contain a factor
  $d_{i,r+1}d_{i,r}$, hence vanish. Since the $\delta$-monomials and the resulting
  $d$-monomials have the same cardinalities in each bidegree, the identities and the strictly
  decreasing monomials $d_{i_n,n}\cdots d_{i_{m+1},m+1}$, with
  $i_n>\cdots>i_{m+1}\geq 0$, form a $k$-basis of $k[\Delta_{\inj}]$.

  Split this basis according to whether the final index $i_{m+1}$ is $0$ or positive. The
  monomials with $i_{m+1}=0$ are precisely those ending in $d_{0,m+1}=u_\Delta(d_{m+1})$.
  Therefore $k[\Delta_{\inj}]$ decomposes as
  \begin{equation}\label{eq:delta-inj-basis-split}
    k[\Delta_{\inj}]
    =
    \mathrm{span}_k(B)\otimes_k u_\Delta(\Omega),
  \end{equation}
  where
  \[
    B=\bigl\{\id_{[n]}\bigr\}_{n\geq 0}\cup
    \bigl\{d_{i_n,n}\cdots d_{i_{m+1},m+1}\mid
      n>m\geq 0,\ i_n>\cdots>i_{m+1}\geq 1\bigr\}.
  \]
  Since $u_\Delta(\Omega)$ is spanned by the identities and the arrows $u_\Delta(d_n)$, with
  all higher composites killed by $u_\Delta(d_{n+1})u_\Delta(d_n)=0$, the decomposition
  \eqref{eq:delta-inj-basis-split} exhibits $k[\Delta_{\inj}]$ as a free right
  $u_\Delta(\Omega)$-module.
\end{proof}

\begin{lemma}\label{lem:augmented-ssimp-free-over-omega-a}
  The $k$-linear category $k[\Delta_{a,\inj}]$ is free as a left and as a right
  $u_a(\Omega_a)$-module.
\end{lemma}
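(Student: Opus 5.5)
We would mimic the proof of Lemma~\ref{lem:ssimp-free-over-omega} verbatim, now allowing the object $[-1]$ and the coface $\delta^0\colon[-1]\to[0]$. As before we prove the right-module statement; the left-module statement follows from the mirror-image canonical form, in which the distinguished factor sits at the other end of each monomial.

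For $n\geq 0$ and $0\leq i\leq n$ set
\[
  d_{i,n}:=\sum_{j=i}^{n}(-1)^j\delta^j\in k[\Delta_{a,\inj}]([n-1],[n]).
\]
In particular $d_{0,0}=\delta^0=u_a(d_0)$, and in general $d_{0,n}=u_a(d_n)$. The identity $d_{i,n+1}d_{i,n}=0$ for $0\leq i\leq n$ is the same computation as in \cite[\S2.1]{KayaKaygun2024}. It uses only the relations \eqref{eq:semisimplicial-coface-relations}, which hold verbatim in $\Delta_{a,\inj}$, and the new case $n=0$ is the instance $i=0$ of $d_{0,1}d_{0,0}=(\delta^0-\delta^1)\delta^0=0$. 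The inversion $\delta^i=(-1)^i(d_{i,n}-d_{i+1,n})$, with the convention $d_{n+1,n}=0$, again lets us rewrite any strictly decreasing $\delta$-monomial $[m]\to[n]$, now with $m\geq -1$, as a combination of strictly decreasing $d$-monomials $d_{i_n,n}\cdots d_{i_{m+1},m+1}$ with $i_n>\cdots>i_{m+1}\geq 0$. The noncanonical terms vanish because they contain a factor $d_{i,r+1}d_{i,r}$.

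Both families are indexed by the $(n-m)$-element subsets of $\{0,\dots,n\}$, so they have equal cardinality in each bidegree. Together with the identities, the canonical $d$-monomials therefore form a $k$-basis of $k[\Delta_{a,\inj}]$. This counting step is the only place needing care: the spanning argument plus equal finite cardinalities gives linear independence. For $m=-1$ the monomial ends in $d_{i_0,0}$ with $i_0=0$ forced, which is consistent with $\Hom([-1],[n])$ being a single point, since only the $d$-monomial with indices $n,n-1,\dots,0$ occurs.

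Next we split the basis according to whether the last index $i_{m+1}$ is $0$ or positive. The monomials with $i_{m+1}=0$ are exactly $b\cdot u_a(d_{m+1})$, where $b$ is either an identity or a basis monomial with $i_{m+1}\geq 1$ (after deleting the final factor). This gives
\[
  k[\Delta_{a,\inj}]=\mathrm{span}_k(B_a)\otimes_k u_a(\Omega_a),
\]
where $B_a$ consists of the identities $\id_{[n]}$ for $n\geq -1$ together with the strictly decreasing monomials whose last index is $\geq 1$. Note that $B_a$ contains no morphism out of $[-1]$ other than its identity.

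Finally, $u_a(\Omega_a)$ has basis the identities and the arrows $u_a(d_n)$ for $n\geq 0$, since all longer composites vanish by $u_a(d_{n+1})u_a(d_n)=0$. We must check that $u_a$ is faithful on these generators, so that $u_a(\Omega_a)\cong\Omega_a$; this is clear because the images are distinct nonzero basis elements. Hence the displayed decomposition exhibits $k[\Delta_{a,\inj}]$ as a free right $u_a(\Omega_a)$-module.

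The main obstacle is ensuring that the boundary case $[-1]$ does not break the basis count or the splitting. Every map out of $[-1]$ is forced to factor through $d_{0,0}$, so we expect it to reduce to the bookkeeping described above.
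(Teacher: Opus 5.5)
Your proposal is correct and follows the paper's proof. You rerun the change-of-basis argument of Lemma~\ref{lem:ssimp-free-over-omega} with $m\geq -1$, observe that the only new generator $\delta^0\colon[-1]\to[0]$ equals $d_{0,0}=u_a(d_0)$ and so is absorbed into the $u_a(\Omega_a)$-factor, and take $B_a$ to be the identities $\id_{[n]}$ ($n\geq -1$) together with the strictly decreasing monomials with last index $\geq 1$, exactly as the paper does.

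Your extra checks (the instance $d_{0,1}d_{0,0}=0$, the count at $m=-1$, faithfulness of $u_a$) are correct details the paper leaves implicit. The one point you share with the paper without proof is the left-module case. It is not a literal mirror of this basis, since $u_a(d_r)=d_{0,r}$ only ever occurs as the rightmost factor of a basis monomial. It is most cleanly settled by noting that, over a field, a left $\Omega_a$-module is free if and only if it is acyclic as a cochain complex. One then checks that each $k[\Delta_{a,\inj}]([m],-)$ is acyclic, where the differential is postcomposition with $u_a(d)$.
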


\begin{proof}
  The proof of Lemma~\ref{lem:ssimp-free-over-omega} applies verbatim with $m\geq -1$ and with
  $\Omega_a$ in place of $\Omega$. The only new generator is
  $\delta^0\colon [-1]\hookrightarrow [0]$, and this generator is $d_{0,0}=u_a(d_0)$, so it is
  absorbed into the $u_a(\Omega_a)$-factor in the analogue of
  \eqref{eq:delta-inj-basis-split}. The corresponding basis $B$ ranges over $n>m\geq -1$ with
  $i_{m+1}\geq 1$, together with the identities $\id_{[n]}$ for $n\geq -1$.
\end{proof}

\begin{lemma}\label{lem:scube-free-over-ssimp}
  Via the cubical sign embedding $v$, the $k$-linear category $k[\square_{\inj}]$ is free as a
  left and as a right $v(k[\Delta_{a,\inj}])$-module. More precisely, for
  $-1\leq m\leq n$, the families
  \[
    \bigl\{v(a)j^0(b)\mid [m]\xrightarrow{b}[q]\xrightarrow{a}[n]\bigr\},
    \qquad
    \bigl\{j^0(b)v(a)\mid [m]\xrightarrow{a}[q]\xrightarrow{b}[n]\bigr\}
  \]
  are $k$-bases of $k[\square_{\inj}](\square_{m+1},\square_{n+1})$.
\end{lemma}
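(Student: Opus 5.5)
Both bases are obtained from the standard monomial basis of $k[\square_{\inj}]$ by a unitriangular change of basis. Every injective cubical map $\square_{m+1}\to\square_{n+1}$ is determined by a subset $S\subseteq\{1,\dots,n+1\}$ of $n-m$ frozen coordinates together with a coloring $\varepsilon\colon S\to\{0,1\}$. Using \eqref{eq:cubical-face-relations} it has a unique normal form as a strictly decreasing composite $\delta^{\varepsilon_{1}}_{s_{1}}\cdots\delta^{\varepsilon_{r}}_{s_{r}}$ with $s_1>\cdots>s_r$. Hence $\dim_k k[\square_{\inj}](\square_{m+1},\square_{n+1})=\binom{n+1}{n-m}2^{n-m}$.

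On the other side, a factorization $[m]\xrightarrow{b}[q]\xrightarrow{a}[n]$ in $\Delta_{a,\inj}$ amounts to choosing the image of $a$, which is a $(q+1)$-subset of $[n]$, and then an injection $[m]\to[q]$. Equivalently, we partition the $n-m$ omitted indices of the composite $ab$ into those omitted by $a$ and those omitted by $b$. So the number of pairs is $\binom{n+1}{n-m}2^{n-m}$, and the two counts agree. It therefore suffices to show that each family spans, or equivalently is linearly independent.

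For the first family, the key computation is the following. The map $v(a)=\prod(\delta^1_{i+1}-\delta^0_{i+1})$ expands, by multilinearity, as a signed sum over all colorings of the coordinates frozen by $a$. The map $j^0(b)$ freezes the remaining coordinates of the composite with color $0$. So $v(a)j^0(b)$ is the signed sum of all cubical injections with frozen set $S=S(ab)$ whose coloring is $1$ exactly on some subset $T$ of $S_a$ (the coordinates coming from $a$) and $0$ elsewhere. I will verify this carefully by tracking how coordinates of $S_b$ are reindexed through $a$; this uses that $j^0$ and $v$ are functors on the same object map, so frozen positions compose as in $\Delta_{a,\inj}$.

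Fix the frozen set $S$. The basis element indexed by the partition $S=S_a\sqcup S_b$ is then $\sum_{T\subseteq S_a}(-1)^{|S_a\setminus T|}e_{T}$, where $e_T$ is the cubical injection colored $1$ exactly on $T$. This is Möbius inversion on the Boolean lattice of subsets of $S$: the transition matrix is $\pm$ unitriangular with respect to inclusion, with leading term $e_{S_a}$. Indeed, the elements $f_U:=\sum_{T\subseteq U}(-1)^{|U\setminus T|}e_T$ for $U\subseteq S$ recover $e_U=\sum_{T\subseteq U}f_T$. Hence the first family is a basis.

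The second family $j^0(b)v(a)$ is handled identically, now with the color-$1$ support ranging over subsets of the coordinates coming from $a$. The point needing care is that the composite is taken in the opposite order, so the frozen positions of $a$ are reindexed through $b$; the Boolean-lattice argument does not care about this. Freeness then follows. In the first family, $v(a)j^0(b)$ with $b$ varying over the basis $\{j^0(b)\}$ exhibits $k[\square_{\inj}]$ as $v(k[\Delta_{a,\inj}])\otimes_k\mathrm{span}\{j^0(b)\}$, which makes it free as a left module. Symmetrically, the second family makes it free as a right module. This also shows that $v$ is faithful, so $v(k[\Delta_{a,\inj}])\cong k[\Delta_{a,\inj}]$.

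I expect the main obstacle to be the bookkeeping in the expansion step. Commuting $\delta^\varepsilon$'s past one another via \eqref{eq:cubical-face-relations} shifts their indices, and one must check that the frozen coordinate set and the coloring of $v(a)j^0(b)$ are exactly as claimed. I would handle this by working with the subset and coloring description of cubical injections directly, rather than with words in the generators.
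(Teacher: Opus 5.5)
Your proposal is correct and takes essentially the paper's approach: both expand $v(a)j^0(b)$ in the standard monochromatic basis and find a unitriangular transition matrix with leading term $j^1(a)j^0(b)=e_{S_a}$, where every other term recolours part of $S_a$ to $0$. Your M\"obius inversion over the subsets of the frozen set $S$ makes explicit the triangularity that the paper gets from ordering by the colour-$1$ set; the dimension count is redundant but harmless, and your deduction of freeness and faithfulness of $v$ from the basis spells out what the paper leaves implicit.
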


\begin{proof}
  We prove the first displayed basis statement; the second is analogous.

  Fix $-1\leq m\leq n$.  A morphism $\square_{m+1}\to \square_{n+1}$ in $\square_{\inj}$ is
  uniquely described by the ordered set of inserted coordinates together with a colour $0$ or
  $1$ assigned to each inserted coordinate.  Equivalently, every morphism has a unique
  monochromatic factorization
  \[
    f=j^1(a)j^0(b),
    \qquad
    [m]\xrightarrow{b}[q]\xrightarrow{a}[n],
  \]
  where $b$ records the coordinates inserted with colour $0$, and $a$ records the coordinates
  inserted with colour $1$.  These morphisms form the standard $k$-basis of
  $k[\square_{\inj}](\square_{m+1},\square_{n+1})$.

  Order this standard basis lexicographically by first comparing the set of coordinates inserted
  with colour $1$, using reverse inclusion, and then by the remaining data.  Thus replacing a
  colour $1$ insertion by a colour $0$ insertion strictly lowers the basis element.  For an
  injective map $a$, the element $v(a)$ is the product obtained by replacing each generator
  $\delta^i$ in $a$ by $\delta^1_{i+1}-\delta^0_{i+1}$.  Expanding $v(a)j^0(b)$, the unique
  term in which every generator of $a$ contributes its $\delta^1$-summand is precisely
  $j^1(a)j^0(b)$, with coefficient $1$.  Every other summand is obtained by changing at least
  one of those $\delta^1$-insertions to a $\delta^0$-insertion.  After rewriting in the unique
  monochromatic normal form, such a summand is strictly lower in the chosen order.

  Therefore the transition matrix from the family
  \[
    \bigl\{v(a)j^0(b)\mid [m]\xrightarrow{b}[q]\xrightarrow{a}[n]\bigr\}
  \]
  to the standard monochromatic basis is unitriangular.  Hence the displayed family is a
  $k$-basis.  The proof for
  $\{j^0(b)v(a)\}$ is identical, using the other unique monochromatic factorization.
\end{proof}

It follows from Lemmas~\ref{lem:ssimp-free-over-omega} and
\ref{lem:augmented-ssimp-free-over-omega-a} that extension of scalars along $u_\Delta$ and $u_a$ is
exact. It also follows from Lemmas~\ref{lem:augmented-ssimp-free-over-omega-a} and
\ref{lem:scube-free-over-ssimp} that $k[\square_{\inj}]$ is free as a right
$u_\square(\Omega)$-module, since $u_\square=v\circ u_a|_{\Omega}$ on generators. Hence extension
of scalars along $u_\square$ is exact as well.

\begin{lemma}\label{lem:change-of-rings-omega}
  Let $u\colon \mathcal B\to \mathcal A$ be a $k$-linear functor, where $\mathcal B$ is either
  $\Omega$ or $\Omega_a$. Assume that
  $u_!=\mathcal A\otimes_{\mathcal B}(-)\colon \lmod{\mathcal B}\to\lmod{\mathcal A}$ is exact.
  Then, for every $X_\bullet\in \rmod{\mathcal A}$, every $M\in \lmod{\mathcal B}$, and every
  $n\geq 0$, there is a natural isomorphism
  \[
    \Tor_n^{\mathcal A}\bigl(X_\bullet,u_!M\bigr)
    \cong
    \Tor_n^{\mathcal B}\bigl(u^\ast X_\bullet,M\bigr).
  \]
\end{lemma}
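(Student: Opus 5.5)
This is the standard flat base-change (Shapiro-type) argument for Tor along an exact extension of scalars. The plan is to take a projective resolution of $M$ over $\mathcal B$, push it forward along $u_!$, and compare the two tensor products by associativity.

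\emph{Step 1: resolve and push forward.} Choose a projective resolution $P_\bullet\to M$ in $\lmod{\mathcal B}$. Each $P_i$ may be taken to be a direct sum of representables $\mathcal B(x,-)$. By hypothesis $u_!$ is exact, so $u_!P_\bullet\to u_!M$ is still a resolution in $\lmod{\mathcal A}$.

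\emph{Step 2: projectivity is preserved.} Since $u_!$ commutes with direct sums and Yoneda gives
\[
  u_!\mathcal B(x,-)=\mathcal A\otimes_{\mathcal B}\mathcal B(x,-)\cong \mathcal A(u(x),-),
\]
each $u_!P_i$ is a sum of representable, hence projective, left $\mathcal A$-modules. So $u_!P_\bullet$ is a projective resolution of $u_!M$, and
\[
  \Tor_n^{\mathcal A}(X_\bullet,u_!M)=H_n\bigl(X_\bullet\otimes_{\mathcal A}u_!P_\bullet\bigr).
\]

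\emph{Step 3: associativity.} For each $i$ there is an isomorphism
\[
  X_\bullet\otimes_{\mathcal A}(\mathcal A\otimes_{\mathcal B}P_i)\cong (X_\bullet\otimes_{\mathcal A}\mathcal A)\otimes_{\mathcal B}P_i\cong u^\ast X_\bullet\otimes_{\mathcal B}P_i.
\]
This uses the coend form of the tensor product over a small $k$-linear category and the co-Yoneda isomorphism $X\otimes_{\mathcal A}\mathcal A\cong u^\ast X$. Here $\mathcal A$ is regarded as an $\mathcal A$-$\mathcal B$-bimodule via $u$. These isomorphisms are natural in $P_i$, so they commute with the differentials. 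Taking homology gives $\Tor_n^{\mathcal B}(u^\ast X_\bullet,M)$. Naturality in $X_\bullet$ is clear, and naturality in $M$ follows from the comparison theorem for resolutions.

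\emph{Main obstacle.} The delicate step is the bookkeeping in Step 3: one must check that the bimodule structure on $\mathcal A$ induced by $u$ makes $X\otimes_{\mathcal A}\mathcal A\cong u^\ast X$ as right $\mathcal B$-modules. This is a routine coend manipulation. The exactness hypothesis enters only in Step 1, to guarantee that $u_!P_\bullet$ is acyclic in positive degrees; by the freeness lemmas above it holds for $u_\Delta$, $u_a$ and $u_\square$. Note that the specific choice $\mathcal B=\Omega$ or $\Omega_a$ plays no role beyond smallness and $k$-linearity.
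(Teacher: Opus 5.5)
Your proof is correct and is the same argument as the paper's: take a projective resolution $Q_\ast\to M$, use that $u_!$ is exact and preserves projectives so that $u_!Q_\ast\to u_!M$ is a projective resolution, then identify $X_\bullet\otimes_{\mathcal A}u_!Q_\ast\cong u^\ast X_\bullet\otimes_{\mathcal B}Q_\ast$ by associativity and take homology. Your additional details (representables are sent to representables, and the co-Yoneda identification of $X_\bullet\otimes_{\mathcal A}\mathcal A$ with $u^\ast X_\bullet$) fill in steps the paper only asserts; the route is the same.
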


\begin{proof}
  Let $Q_\ast\twoheadrightarrow M$ be a projective resolution in $\lmod{\mathcal B}$. Since
  $u_!$ is exact and preserves projectives, $u_!Q_\ast\to u_!M$ is a projective resolution in
  $\lmod{\mathcal A}$. Associativity of tensor products for rings with several objects gives
  \[
    X_\bullet\otimes_{\mathcal A}u_!Q_\ast
    \cong
    (u^\ast X_\bullet)\otimes_{\mathcal B}Q_\ast.
  \]
  Passing to homology gives the claimed isomorphism.
\end{proof}
\section{Homology and weak equivalences}\label{sec:homological-algebra}

We now specify the weak equivalences used in the localization arguments below.  The
nonaugmented semisimplicial and semicubical theories are controlled by nonnegative chain homology.
The augmented semisimplicial theory has one additional degree $-1$ invariant, and good
truncation separates this augmentation datum from ordinary degree $0$ homology.  We formulate
these facts in Tor-theoretic terms so that the later Gabriel--Zisman comparisons have a uniform
language.

\subsection{Weak equivalences for $\Omega$}\label{subsec:omega-we}

By \eqref{eq:omega-mod-chain}, a right $\Omega$-module is a nonnegatively graded chain complex.
We declare a morphism $f_\bullet\colon C_\bullet\to D_\bullet$ in $\rmod{\Omega}$ to be a
\emph{weak equivalence} if $f_\bullet$ is a quasi-isomorphism.  By
\eqref{eq:omega-tor-k0-k-constant}, this condition is equivalent to requiring
$\Tor_n^\Omega(f_\bullet,k[0])$ to be an isomorphism for every $n\geq 0$.  We write
\[
  W_\Omega:=\{\text{quasi-isomorphisms in }\rmod{\Omega}\}.
\]

\subsection{Semisimplicial and semicubical homology}\label{subsec:simplicial-cubical-homology}

Let $\mathcal A$ be either $k[\Delta_{\inj}]$ or $k[\square_{\inj}]$, and let
$u\colon \Omega\to\mathcal A$ denote the corresponding comparison functor, $u_\Delta$ or
$u_\square$.  To avoid duplicating notation, write $\mathbf n$ for $[n]$ in the semisimplicial case
and for $\square_n$ in the semicubical case.  Restriction along $u$ sends
$X_\bullet\in \rmod{\mathcal A}$ to the chain complex
\begin{equation}\label{eq:simplicial-cubical-chain-complex}
  u^\ast X_\bullet
  =
  \bigl(\cdots\to X_2\xrightarrow{\partial_2}X_1\xrightarrow{\partial_1}X_0\bigr),
  \qquad
  \partial_n:=X_\bullet\bigl(u(d_n)\bigr).
\end{equation}
We denote the resulting homology by $H_n^\Delta(X_\bullet)$ in the semisimplicial case and by
$H_n^\square(X_\bullet)$ in the semicubical case.

\begin{lemma}\label{lem:simplicial-cubical-resolution}
  Let $\mathcal A$ be either $k[\Delta_{\inj}]$ or $k[\square_{\inj}]$.  The representables
  $P_n:=\mathcal A([n],-)$, together with differentials $d_n:=u(d_n)_\ast$, form an augmented
  projective resolution
  \[
    \cdots \to P_2\xrightarrow{d_2}P_1\xrightarrow{d_1}P_0
    \xrightarrow{\varepsilon} k_\bullet \longrightarrow 0
  \]
  in $\lmod{\mathcal A}$, where $\varepsilon$ sends every vertex to $1\in k$.
\end{lemma}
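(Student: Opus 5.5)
The plan is to check the statement objectwise: a sequence in $\lmod{\mathcal A}$ is exact exactly when it is exact after evaluating at every object. First, each $P_n=\mathcal A(\mathbf n,-)$ is projective by the Yoneda lemma, since $\Hom_{\mathcal A}(P_n,M)\cong M(\mathbf n)$ is exact in $M$. Second, the maps $d_n=u(d_n)_\ast$ are given by precomposition with $u(d_n)\in\mathcal A(\mathbf{n-1},\mathbf n)$. Precomposition commutes with the postcomposition that defines the left module structure, so each $d_n$ is a map of left $\mathcal A$-modules. The identity $u(d_{n+1})u(d_n)=0$ from \S\ref{subsec:comparison-functors} gives $d_nd_{n+1}=0$. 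For the augmentation, $u(d_1)$ is $\delta^0-\delta^1$ in the semisimplicial case and $\delta_1^1-\delta_1^0$ in the semicubical case. Both are differences of two vertex maps, so $\varepsilon d_1=0$.

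Evaluate at a fixed object $\mathbf m$. The space $P_n(\mathbf m)$ has as basis the injective maps $\mathbf n\to\mathbf m$, and the differential sends $f$ to $f\circ u(d_n)$.

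In the semisimplicial case, the injections $[n]\to[m]$ are the $n$-simplices of the standard simplex $\Delta^m$. The formula $f\circ\sum_i(-1)^i\delta^i$ is the usual simplicial boundary, and $\varepsilon$ is the usual augmentation. So $P_\bullet([m])\to k$ is the augmented simplicial chain complex of $\Delta^m$. I would prove it exact with the explicit cone homotopy $h(f)=f'$, defined as follows. If $0\notin\mathrm{im} f$, then $f'\colon[n+1]\to[m]$ is the injection with image $\{0\}\cup\mathrm{im}f$. If $0\in\mathrm{im}f$, then $f'=0$. In degree $-1$, $h(1)$ is the vertex $0$. The standard identity to check is $\partial h+h\partial=\id$ on the augmented complex.

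In the semicubical case, the injections $\square_n\to\square_m$ are the $n$-dimensional faces of the cube $[0,1]^m$. Such a face is a word in $\{0,1,\ast\}^m$ with $n$ stars. Under this identification, $P_\bullet(\square_m)$ is the cellular chain complex of the cube. It is also the $m$-fold tensor power of the chain complex $C(I)$ of the interval, namely $k\langle\ast\rangle\to k\langle 0\rangle\oplus k\langle 1\rangle$ with $\ast\mapsto 1-0$. The sign $(-1)^{i-1}$ in $u_\square(d_n)$ should be exactly the Koszul sign. Here $i$ counts the position of the modified coordinate among the free coordinates of the source cube, after composing with $f$, so the sign is $(-1)^{\#\{\text{stars before it}\}}$.

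Given this, the Künneth theorem over the field $k$ applies. Since $H_\ast(C(I))=k$ in degree $0$ and the augmentation is an isomorphism there, $H_\ast(P_\bullet(\square_m))=k$ in degree $0$ with $\varepsilon$ an isomorphism. The case $m=0$ is trivial. Alternatively, I would write down the contracting homotopy directly: it replaces the first coordinate $0$ by $\ast$ and corrects by induction on $m$.

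The main obstacle is the sign bookkeeping in the cubical case. I have to verify carefully that $f\circ u_\square(d_n)$, expanded through the cubical face relations \eqref{eq:cubical-face-relations}, really reproduces the tensor-product differential on $C(I)^{\otimes m}$ with the correct signs. Once that identification is made, exactness is formal. The simplicial case is classical.
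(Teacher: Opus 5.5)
Your proposal is correct and follows the same route as the paper. You evaluate objectwise, identify $P_\bullet([m])$ with the augmented simplicial chain complex of the standard $m$-simplex and $P_\bullet(\square_m)$ with the signed cellular chain complex of the $m$-cube, and conclude by contractibility; your explicit cone homotopy and the identification $P_\bullet(\square_m)\cong C(I)^{\otimes m}$ check out (the sign $(-1)^{i-1}$ is indeed the Koszul sign, since $i-1$ is the number of free coordinates preceding the $i$-th one), and they simply make explicit what the paper delegates to Brown--Higgins.
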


\begin{proof}
  Each $P_n$ is representable, hence projective.  Objectwise, the complex $P_\ast([m])$ is the
  semisimplicial chain complex of the standard $m$-simplex, while $P_\ast(\square_m)$ is the
  signed cellular chain complex of the standard $m$-cube; see
  \cite[Section~1]{BrownHiggins1981AlgebraOfCubes}.  Both augmented complexes are contractible,
  hence objectwise exact.
\end{proof}

\begin{proposition}\label{prop:simplicial-cubical-homology}
  For every $X_\bullet\in \rmod{\mathcal A}$ and every $n\geq 0$, there are natural
  isomorphisms
  \[
    H_n(u^\ast X_\bullet)
    \cong
    \Tor_n^{\mathcal A}(X_\bullet,k_\bullet)
    \cong
    \Tor_n^\Omega(u^\ast X_\bullet,k[0]).
  \]
  Moreover, the induced coefficient $u_!k[0]$ and the constant coefficient $k_\bullet$ define
  the same Tor functor:
  \[
    \Tor_n^{\mathcal A}(X_\bullet,u_!k[0])
    \cong
    \Tor_n^{\mathcal A}(X_\bullet,k_\bullet)
    \qquad (n\geq 0).
  \]
\end{proposition}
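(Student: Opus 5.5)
The plan is to compute everything from the projective resolution of Lemma~\ref{lem:simplicial-cubical-resolution} and then transport along $u$ using Lemma~\ref{lem:change-of-rings-omega}.

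\emph{First isomorphism.} Since $P_\ast\to k_\bullet$ is a projective resolution in $\lmod{\mathcal A}$, we have
\[
\Tor_n^{\mathcal A}(X_\bullet,k_\bullet)=H_n(X_\bullet\otimes_{\mathcal A}P_\ast).
\]
By the Yoneda isomorphism $X_\bullet\otimes_{\mathcal A}\mathcal A(\mathbf n,-)\cong X_n$, the complex $X_\bullet\otimes_{\mathcal A}P_\ast$ has $X_n$ in degree $n$. Its differential $\id\otimes u(d_n)_\ast$ corresponds to $X_\bullet(u(d_n))=\partial_n$, so the complex is exactly $u^\ast X_\bullet$ of \eqref{eq:simplicial-cubical-chain-complex}. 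Naturality in $X_\bullet$ is clear, since Yoneda is natural. One small point must be checked: the augmentation is compatible, i.e.\ $\varepsilon\circ u(d_1)_\ast=0$. This holds because $u(d_1)$ is $\delta^0$ minus the other coface in both cases, up to sign.

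\emph{Second isomorphism.} Apply Lemma~\ref{lem:change-of-rings-omega} with $M=k[0]$. Its hypothesis, exactness of $u_!$, is the remark following Lemma~\ref{lem:scube-free-over-ssimp}: freeness of $\mathcal A$ as a right $u(\Omega)$-module (Lemmas~\ref{lem:ssimp-free-over-omega}, \ref{lem:augmented-ssimp-free-over-omega-a}, \ref{lem:scube-free-over-ssimp}) makes $\mathcal A\otimes_\Omega-$ exact. The lemma then gives
\[
\Tor_n^{\mathcal A}(X_\bullet,u_!k[0])\cong\Tor_n^\Omega(u^\ast X_\bullet,k[0]).
\]
By \eqref{eq:omega-tor-k0-k-constant} the right-hand side is $H_n(u^\ast X_\bullet)$.

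\emph{Coefficient comparison.} Combining the two isomorphisms already yields $\Tor_n^{\mathcal A}(X_\bullet,u_!k[0])\cong\Tor_n^{\mathcal A}(X_\bullet,k_\bullet)$, but I would like a map inducing it. To get one, compute $u_!k[0]$ directly. Take the projective resolution of $k[0]$ over $\Omega$ given by
\[
\cdots\to\Omega([1],-)\to\Omega([0],-)\to k[0],
\]
with maps $d_n$ acting by precomposition; this is exact because $\Omega([n],-)$ is spanned by $\id$ and $d_{n+1}$. Applying the exact functor $u_!$ sends $\Omega([n],-)$ to $P_n$ with differentials $u(d_n)_\ast$. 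Hence $u_!k[0]\cong\operatorname{coker}(P_1\to P_0)\cong k_\bullet$, the last step by Lemma~\ref{lem:simplicial-cubical-resolution}.

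This gives the stronger statement that $u_!k[0]\cong k_\bullet$ as left $\mathcal A$-modules, with the map induced by $\varepsilon$. The Tor isomorphism is therefore induced by a module isomorphism, and it is compatible with the other two isomorphisms.

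\emph{Main obstacle.} The delicate point is identifying the differentials. Under Yoneda, $X\otimes_{\mathcal A}u(d_n)_\ast$ must match $X(u(d_n))$, and $u_!$ applied to $(d_n)_\ast$ must match $u(d_n)_\ast$ on representables. Both are routine consequences of the formula $u_!\Omega(b,-)\cong\mathcal A(u b,-)$, but the variance conventions need care.
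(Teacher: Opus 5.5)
Your proposal is correct and follows the paper's proof. Co-Yoneda applied to the resolution of Lemma~\ref{lem:simplicial-cubical-resolution} gives $H_n(u^\ast X_\bullet)\cong\Tor_n^{\mathcal A}(X_\bullet,k_\bullet)$; Lemma~\ref{lem:change-of-rings-omega} with $M=k[0]$, together with \eqref{eq:omega-tor-k0-k-constant}, gives the rest; and the coefficient comparison follows by composing these isomorphisms, exactly as in the paper. Your additional computation $u_!Q_\ast\cong P_\ast$, hence $u_!k[0]\cong k_\bullet$ as left $\mathcal A$-modules, is a genuine strengthening: it needs only right exactness of $u_!$, and since Lemma~\ref{lem:simplicial-cubical-resolution} already makes $u_!Q_\ast$ a resolution, the freeness input is not actually needed for this proposition.
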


\begin{proof}
  Tensoring the resolution of Lemma~\ref{lem:simplicial-cubical-resolution} with $X_\bullet$ and
  applying the co-Yoneda lemma gives $X_\bullet\otimes_{\mathcal A}P_n\cong X_n$, and the induced
  differential is $\partial_n=X_\bullet(u(d_n))$.  Hence
  $H_n(u^\ast X_\bullet)\cong \Tor_n^{\mathcal A}(X_\bullet,k_\bullet)$.

  Since extension of scalars along $u$ is exact by Section~\ref{sec:categorical-algebra},
  Lemma~\ref{lem:change-of-rings-omega} gives
  $\Tor_n^{\mathcal A}(X_\bullet,u_!k[0])
  \cong \Tor_n^\Omega(u^\ast X_\bullet,k[0])$.  Finally,
  \eqref{eq:omega-tor-k0-k-constant} identifies
  $\Tor_n^\Omega(u^\ast X_\bullet,k[0])$ with $H_n(u^\ast X_\bullet)$.  The displayed
  isomorphisms follow.
\end{proof}

\begin{definition}\label{def:delta-inj-weak-equivalence}
  A morphism $f_\bullet\colon X_\bullet\to Y_\bullet$ in $\rmod{\mathcal A}$ is a \emph{weak
    equivalence} if $u^\ast f_\bullet$ is a quasi-isomorphism in $\rmod{\Omega}$.  We write
  $W_{\Delta_{\inj}}$ and $W_{\square_{\inj}}$ for the corresponding classes.
\end{definition}

\begin{theorem}\label{thm:delta-inj-weak-equivalence}
  For a morphism $f_\bullet\colon X_\bullet\to Y_\bullet$ in $\rmod{\mathcal A}$, the following
  conditions are equivalent:
  \begin{enumerate}[(1)]
    \item $f_\bullet\in W_{\Delta_{\inj}}$ if $\mathcal A=k[\Delta_{\inj}]$, respectively
    $f_\bullet\in W_{\square_{\inj}}$ if $\mathcal A=k[\square_{\inj}]$;
    \item $f_\bullet$ induces an isomorphism on the homology of
    \eqref{eq:simplicial-cubical-chain-complex} in every degree $n\geq 0$;
    \item $f_\bullet$ induces an isomorphism on
    $\Tor_n^{\mathcal A}(-,k_\bullet)$ for every $n\geq 0$;
    \item $f_\bullet$ induces an isomorphism on
    $\Tor_n^\Omega(u^\ast(-),k[0])$ for every $n\geq 0$.
  \end{enumerate}
\end{theorem}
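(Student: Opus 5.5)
The plan is to close the cycle of implications using Definition~\ref{def:delta-inj-weak-equivalence} and the \emph{naturality} of the isomorphisms in Proposition~\ref{prop:simplicial-cubical-homology}. Everything reduces to saying that a morphism induces isomorphisms on one functor if and only if it does on a naturally isomorphic functor.

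First, (1)$\Leftrightarrow$(2). By Definition~\ref{def:delta-inj-weak-equivalence}, $f_\bullet$ lies in $W_{\Delta_{\inj}}$ (resp.\ $W_{\square_{\inj}}$) exactly when $u^\ast f_\bullet$ is a quasi-isomorphism of nonnegatively graded chain complexes. By \eqref{eq:simplicial-cubical-chain-complex}, $u^\ast X_\bullet$ is the complex with differentials $\partial_n=X_\bullet(u(d_n))$. Moreover, $u^\ast f_\bullet$ is the levelwise family $f_n$, which commutes with these differentials by functoriality of $f_\bullet$. So $H_n(u^\ast f_\bullet)$ is precisely the map induced on the homology of \eqref{eq:simplicial-cubical-chain-complex}, and the equivalence is a tautology.

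Next, (2)$\Leftrightarrow$(3)$\Leftrightarrow$(4). Proposition~\ref{prop:simplicial-cubical-homology} gives isomorphisms
\[
H_n(u^\ast X_\bullet)\cong \Tor_n^{\mathcal A}(X_\bullet,k_\bullet)\cong \Tor_n^\Omega(u^\ast X_\bullet,k[0])
\]
that are natural in $X_\bullet$. For the square
\[
H_n(u^\ast X_\bullet)\to H_n(u^\ast Y_\bullet)
\]
against
\[
\Tor_n^{\mathcal A}(X_\bullet,k_\bullet)\to \Tor_n^{\mathcal A}(Y_\bullet,k_\bullet),
\]
the horizontal maps are induced by $f_\bullet$ and the vertical maps are the natural isomorphisms. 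Since the square commutes, one horizontal map is an isomorphism if and only if the other is. The same argument applies to the second isomorphism.

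The only point needing care is that naturality. I will verify it directly rather than cite it.
\begin{itemize}
\item For the first isomorphism, the resolution $P_\ast\to k_\bullet$ of Lemma~\ref{lem:simplicial-cubical-resolution} is fixed, independent of $X_\bullet$. The co-Yoneda identification $X_\bullet\otimes_{\mathcal A}P_n\cong X_n$ is natural in $X_\bullet$, so $f_\bullet\otimes P_\ast$ corresponds to $u^\ast f_\bullet$ as a chain map.
\item For the second isomorphism, Lemma~\ref{lem:change-of-rings-omega} uses a fixed resolution $Q_\ast\to k[0]$. Both $u_!$ and the associativity isomorphism $X_\bullet\otimes_{\mathcal A}u_!Q_\ast\cong u^\ast X_\bullet\otimes_\Omega Q_\ast$ are natural in $X_\bullet$.
\item The comparison $u_!k[0]$ versus $k_\bullet$, and equally \eqref{eq:omega-tor-k0-k-constant}, are isomorphisms in the coefficient variable induced by fixed coefficient maps. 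These are automatically natural in the first variable.
\end{itemize}
Composing natural isomorphisms gives natural isomorphisms, which completes the cycle of implications. I expect no genuine obstacle: the naturality bookkeeping above is the only nontrivial content, since the substantive inputs, namely exactness of $u_!$ and contractibility of the simplex and cube complexes, have already been established.
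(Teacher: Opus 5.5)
Your proposal is correct and is essentially the paper's argument. The paper simply declares the equivalences immediate from Proposition~\ref{prop:simplicial-cubical-homology}; you supply the tautological step (1)$\Leftrightarrow$(2) and the naturality bookkeeping that the paper leaves implicit.

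One caution about your third bullet: do not lean on the middle term of \eqref{eq:omega-tor-k0-k-constant}. Over $\Omega$ one has $k_\bullet\cong k[0]\oplus\bigoplus_{j\geq1}\Omega([2j-1],-)$, so $\Tor_0^\Omega(C_\bullet,k_\bullet)\cong H_0(C_\bullet)\oplus\bigoplus_{j\geq1}C_{2j-1}$, which is not $H_0(C_\bullet)$. What your chain actually needs is less. First, the natural identification $\Tor_n^\Omega(C_\bullet,k[0])\cong H_n(C_\bullet)$, which comes from the resolution $\cdots\to\Omega([1],-)\to\Omega([0],-)\to k[0]$. Second, the module isomorphism $u_!k[0]\cong k_\bullet$, which follows from right exactness of $u_!$ together with Lemma~\ref{lem:simplicial-cubical-resolution}.
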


\begin{proof}
  The equivalences are immediate from Proposition~\ref{prop:simplicial-cubical-homology}.
\end{proof}

In the semisimplicial case, this is the injective version of \cite[Theorem~3.3]{KayaKaygun2024}.
In the semicubical case, it is the $k$-linear form of the signed cubical chain comparison of
Brown--Higgins~\cite{BrownHiggins1981AlgebraOfCubes}.

\subsection{Augmented semisimplicial homology and good truncation}
\label{subsec:augmented-semisimplicial-homology}

The augmented semisimplicial case has a different low-degree behavior.  Let
$j\colon \Delta_{\inj}\hookrightarrow \Delta_{a,\inj}$ be the inclusion, and define
$k_\bullet[0]:=j_!(k_\bullet)\in \lmod{\Delta_{a,\inj}}$.  Thus
\[
  k_\bullet[0]([n])=
  \begin{cases}
    k,& n\geq 0,\\
    0,& n=-1.
  \end{cases}
\]
The constant left $k[\Delta_{a,\inj}]$-module $k_\bullet$ is representable, because $[-1]$ is
initial, and hence $k_\bullet$ is projective.  Thus $k_\bullet$ does not detect augmented homology.
The coefficient object that detects the nonnegative part is $k_\bullet[0]$.

Let $u_a^0\colon \Omega\to k[\Delta_{a,\inj}]$ be the restriction of the augmented comparison
functor $u_a\colon \Omega_a\to k[\Delta_{a,\inj}]$ along $\Omega\hookrightarrow\Omega_a$.  For
$X_\bullet\in \rmod{\Delta_{a,\inj}}$, the full augmented chain complex is
\begin{equation}\label{eq:delta-a-inj-differential}
  C_\ast^a(X_\bullet)
  =
  \bigl(\cdots\to X_1\xrightarrow{\partial_1^a}X_0\xrightarrow{\partial_0^a}X_{-1}\bigr),
  \qquad
  \partial_n^a:=X_\bullet\Bigl(\sum_{i=0}^n(-1)^i\delta^i\Bigr).
\end{equation}
We write $H_n^a(X_\bullet)$ for the homology of $C_\ast^a(X_\bullet)$; in particular,
$H_{-1}^a(X_\bullet)=\operatorname{coker}(\partial_0^a)$.  The restricted complex
$(u_a^0)^\ast X_\bullet$ is the brutal nonnegative truncation of
$C_\ast^a(X_\bullet)$.  Therefore the degree $0$ homology of $(u_a^0)^\ast X_\bullet$
does not, by itself, recover the degree $0$ homology of the augmented complex.

\begin{definition}\label{def:delta-a-inj-good-truncation}
  The \emph{good truncation} of $X_\bullet\in \rmod{\Delta_{a,\inj}}$ is the chain complex
  $\tr(X_\bullet)\in \Ch_{\geq 0}(k)$ defined by
  \[
    \tr(X_\bullet)_n=
    \begin{cases}
      X_n,& n\geq 1,\\[2pt]
      \ker(\partial_0^a\colon X_0\to X_{-1}),& n=0,
    \end{cases}
  \]
  with differential induced by $\partial_n^a$.
\end{definition}

\begin{lemma}\label{lem:delta-a-inj-resolution}
  The representables $P_n:=k[\Delta_{a,\inj}]([n],-)$, for $n\geq 0$, with differentials
  $d_n:=\sum_i(-1)^i(\delta^i)_\ast$ and with augmentation $\varepsilon\colon P_0\to k_\bullet[0]$
  equal to $1$ in nonnegative degrees and $0$ at $[-1]$, form a projective resolution of
  $k_\bullet[0]$ in $\lmod{\Delta_{a,\inj}}$.
\end{lemma}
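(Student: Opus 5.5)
Projectivity of each $P_n$ is immediate from the Yoneda lemma, so the content is exactness of the augmented complex
\[
  \cdots\to P_1\xrightarrow{d_1}P_0\xrightarrow{\varepsilon}k_\bullet[0]\to 0
\]
in $\lmod{\Delta_{a,\inj}}$. Exactness in a functor category is checked objectwise, so I will fix an object $[m]$ with $m\geq -1$ and compute. Before that, I will check that $\varepsilon$ is a module map and that $d_1d_2=0$ and $\varepsilon d_1=0$. The relation $d_nd_{n+1}=0$ comes from \eqref{eq:semisimplicial-coface-relations}, exactly as for $u_a$. For $\varepsilon$, note that $k_\bullet[0]$ is constant $k$ on $[m]$ with $m\geq0$ and zero at $[-1]$. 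Since $P_0([-1])=0$, because there are no maps $[0]\to[-1]$, naturality of $\varepsilon$ is only tested on maps between nonnegative objects, where it holds. Likewise $\varepsilon d_1$ sends an edge to the difference of its endpoint values, which is $1-1=0$.

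\textbf{The case $m=-1$.} Here $P_n([-1])=k[\Delta_{a,\inj}]([n],[-1])=0$ for all $n\geq0$, and also $k_\bullet[0]([-1])=0$. The complex at $[-1]$ is therefore identically zero, and hence exact. This is precisely where $k_\bullet[0]$, rather than the constant module $k_\bullet$, is the correct target: at $[-1]$ the constant module would be $k$, which nothing surjects onto.

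\textbf{The case $m\geq0$.} Here $P_n([m])$ has as basis the injective order-preserving maps $[n]\to[m]$, that is, the $n$-faces of the standard $m$-simplex. The map induced by $d_n=\sum_i(-1)^i(\delta^i)_\ast$ is precepomposition with the cofaces, which is the usual simplicial boundary. The augmentation sends each vertex to $1$. So $P_\ast([m])\to k$ is the augmented simplicial chain complex of $\Delta^m$. This is the same identification used in Lemma~\ref{lem:simplicial-cubical-resolution}, because for $n\geq0$ we have $k[\Delta_{a,\inj}]([n],[m])=k[\Delta_{\inj}]([n],[m])$ when $m\geq0$. Its exactness then follows from the contractibility cited there.

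To be self-contained, I would also exhibit a contracting homotopy directly. Define the cone on vertex $0$ by $h(f)=$ the map $[n+1]\to[m]$ that sends $0\mapsto0$ and $i+1\mapsto f(i)$ when $0\notin\operatorname{im}f$, and $h(f)=0$ otherwise; on the augmentation term set $h(1)=$ the vertex $0$. Then $dh+hd=\id$ in positive degrees and $\varepsilon h=\id$ on $k$. The only delicate step is the sign and case bookkeeping in $dh+hd=\id$, distinguishing whether $0$ lies in the image of $f$. This is the standard cone-on-a-vertex computation, and I expect it to be the only part requiring care. Combining the two cases gives objectwise exactness, hence the resolution.
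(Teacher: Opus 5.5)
Your proposal is correct and follows essentially the same route as the paper: projectivity by Yoneda, then objectwise exactness. At $[-1]$ the complex is identically zero, and at $[m]$ with $m\geq 0$ it is the augmented simplicial chain complex of the standard $m$-simplex. Beyond that, you check naturality of $\varepsilon$ and $\varepsilon d_1=0$, and you give an explicit cone-on-vertex-$0$ contracting homotopy, which verifies correctly in both cases $0\in\operatorname{im}f$ and $0\notin\operatorname{im}f$. These only make explicit what the paper leaves to the standard contractibility of the simplex.
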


\begin{proof}
  Each $P_n$ is representable, hence projective.  Evaluated at $[m]$ with $m\geq 0$, the
  augmented complex is the semisimplicial chain complex of the standard $m$-simplex.  Evaluated
  at $[-1]$, the complex is zero.  Hence the augmented complex is objectwise exact.
\end{proof}

\begin{proposition}\label{prop:delta-a-inj-homology}
  Let $X_\bullet\in \rmod{\Delta_{a,\inj}}$.  For every $n\geq 1$, there are natural isomorphisms
  \[
    H_n\bigl(\tr(X_\bullet)\bigr)
    \cong
    \Tor_n^{k[\Delta_{a,\inj}]}(X_\bullet,k_\bullet[0])
    \cong
    \Tor_n^\Omega((u_a^0)^\ast X_\bullet,k[0]).
  \]
  In degree $0$, there is a natural exact sequence
  \begin{equation}\label{eq:delta-a-inj-low-degree-exact-sequence}
    0 \to H_0\bigl(\tr(X_\bullet)\bigr)
    \to \Tor_0^{k[\Delta_{a,\inj}]}(X_\bullet,k_\bullet[0])
    \xrightarrow{\bar\partial_0}
    X_{-1}
    \to H_{-1}^a(X_\bullet)
    \to 0.
  \end{equation}
  Moreover,
  \[
    \Tor_n^{k[\Delta_{a,\inj}]}(X_\bullet,u_{a,!}^0k[0])
    \cong
    \Tor_n^{k[\Delta_{a,\inj}]}(X_\bullet,k_\bullet[0])
    \qquad (n\geq 0).
  \]
\end{proposition}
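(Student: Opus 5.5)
Tensor the resolution of Lemma~\ref{lem:delta-a-inj-resolution} with $X_\bullet$ and compare the result with the augmented chain complex $C_\ast^a(X_\bullet)$ of \eqref{eq:delta-a-inj-differential}.

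\emph{Step 1 (compute the derived tensor product).} By the co-Yoneda lemma, $X_\bullet\otimes_{k[\Delta_{a,\inj}]}P_n\cong X_n$ for $n\geq 0$. The induced differential $X_n\to X_{n-1}$ is $X_\bullet(\sum_i(-1)^i\delta^i)=\partial_n^a$ for $n\geq 1$. So $\Tor_\ast^{k[\Delta_{a,\inj}]}(X_\bullet,k_\bullet[0])$ is the homology of the complex $\cdots\to X_1\xrightarrow{\partial_1^a}X_0$. This complex is exactly $(u_a^0)^\ast X_\bullet$, the brutal truncation of $C^a_\ast(X_\bullet)$, with $X_{-1}$ removed. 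In particular $\Tor_0=X_0/\operatorname{im}\partial_1^a$, and for $n\geq 1$ we get $\Tor_n=\ker\partial_n^a/\operatorname{im}\partial_{n+1}^a$.

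\emph{Step 2 (compare with the good truncation).} The good truncation $\tr(X_\bullet)$ agrees with the brutal truncation in degrees $\geq 1$. Moreover $\tr(X_\bullet)_1=X_1$ maps into $\ker\partial_0^a$, because $\partial_0^a\partial_1^a=0$ (this is the relation $d_1d_0=0$ in $\Omega_a$). Hence $H_n(\tr(X_\bullet))=\Tor_n$ for $n\geq 2$. In degree $1$, the cycles of $\tr(X_\bullet)$ are $\ker\partial_1^a$ in both complexes, so the two homologies also agree. For the last isomorphism in degrees $n\geq1$, apply \eqref{eq:omega-tor-k0-k-constant} to the chain complex $(u_a^0)^\ast X_\bullet$; this gives $\Tor_n^\Omega((u_a^0)^\ast X_\bullet,k[0])\cong H_n((u_a^0)^\ast X_\bullet)$, which is the group computed in Step~1.

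\emph{Step 3 (degree $0$).} The map $\partial_0^a$ kills $\operatorname{im}\partial_1^a$, so it descends to $\bar\partial_0\colon X_0/\operatorname{im}\partial_1^a\to X_{-1}$. Its kernel is $\ker\partial_0^a/\operatorname{im}\partial_1^a=H_0(\tr(X_\bullet))$. Its cokernel is $X_{-1}/\operatorname{im}\partial_0^a=H^a_{-1}(X_\bullet)$. Together these give the exact sequence \eqref{eq:delta-a-inj-low-degree-exact-sequence}, and everything is natural in $X_\bullet$.

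\emph{Step 4 (change of coefficients).} Extension along $u_a$ is exact by Lemma~\ref{lem:augmented-ssimp-free-over-omega-a}. I would check that restricting to $\Omega\subset\Omega_a$ keeps $u^0_{a,!}$ exact. Then Lemma~\ref{lem:change-of-rings-omega} with $\mathcal B=\Omega$ gives
\[
\Tor_n^{k[\Delta_{a,\inj}]}(X_\bullet,u^0_{a,!}k[0])\cong\Tor_n^\Omega((u_a^0)^\ast X_\bullet,k[0]).
\]
By \eqref{eq:omega-tor-k0-k-constant} and Step~1, the right-hand side equals $\Tor_n(X_\bullet,k_\bullet[0])$ for all $n\geq 0$. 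Alternatively, one can compute $u^0_{a,!}k[0]$ directly: it is the cokernel of $u^0_{a,!}$ applied to $\Omega([1],-)\xrightarrow{d_1}\Omega([0],-)$, which is $P_0/\operatorname{im}d_1$.

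The main obstacle is the exactness of $u^0_{a,!}$ as a functor from $\Omega$. The freeness in Lemma~\ref{lem:augmented-ssimp-free-over-omega-a} is over $\Omega_a$, not $\Omega$. So I would also need $\Omega_a$ to be flat over $\Omega$ as a right module. That is plausible because $\Omega_a$ only adjoins the single arrow $d_0$ and the representable at $[-1]$. If this fails, I would fall back on the direct computation of $u^0_{a,!}k[0]$ sketched in Step~4, replacing Lemma~\ref{lem:change-of-rings-omega}.
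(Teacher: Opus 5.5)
Your argument is correct, and its skeleton matches the paper's. Co-Yoneda identifies $X_\bullet\otimes_{k[\Delta_{a,\inj}]}P_\ast$ with the brutal truncation $(u_a^0)^\ast X_\bullet$, and the rest compares that complex with $\tr(X_\bullet)$. The paper does your Steps~2--3 in packaged form: it uses the short exact sequence $0\to\tr(X_\bullet)\to(u_a^0)^\ast X_\bullet\to I(X_\bullet)[0]\to 0$ with $I(X_\bullet)=\operatorname{im}\partial_0^a$, takes its long exact sequence, and splices with $0\to I(X_\bullet)\to X_{-1}\to H^a_{-1}(X_\bullet)\to 0$. Your direct kernel/cokernel computation for $\bar\partial_0$ is the same content, unpacked.

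The real divergence is the $\Tor^\Omega$ term. The paper obtains it from Lemma~\ref{lem:change-of-rings-omega}, which natively produces $\Tor^{k[\Delta_{a,\inj}]}(X_\bullet,u^0_{a,!}k[0])$. That route therefore needs both exactness of $u^0_{a,!}$ and the final coefficient comparison. You instead apply \eqref{eq:omega-tor-k0-k-constant} directly to the chain complex $(u_a^0)^\ast X_\bullet$, which needs neither and is cleaner.

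For the obstacle you flag, make your fallback the main argument and finish it. Since $u^0_{a,!}$ is right exact and sends $\Omega([n],-)$ to $P_n$, the presentation $\Omega([1],-)\to\Omega([0],-)\to k[0]\to 0$ (first map induced by $d_1$) gives $u^0_{a,!}k[0]\cong\operatorname{coker}(d_1\colon P_1\to P_0)$. Exactness of the resolution of Lemma~\ref{lem:delta-a-inj-resolution} at $P_0$ identifies this cokernel with $k_\bullet[0]$. Thus $u^0_{a,!}k[0]\cong k_\bullet[0]$ as left modules, and the last display holds trivially in every degree. This is sharper than the paper's ``up to Tor-equivalence'' and independent of any exactness.

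Your primary route also goes through. For each $m$, the right $\Omega$-module $k[\Delta_{a,\inj}](u_a^0(-),[m])$ is the simplicial chain complex of the standard $m$-simplex (zero for $m=-1$). It is acyclic in positive degrees, hence a direct sum of disks and copies of $k$ in degree $0$, i.e.\ of representables. So $k[\Delta_{a,\inj}]$ is projective as a right $\Omega$-module and $u^0_{a,!}$ is exact. Equivalently, each representable right $\Omega_a$-module restricts to $\Omega$ as a representable or as zero, so the freeness of Lemma~\ref{lem:augmented-ssimp-free-over-omega-a} does descend to $\Omega$. This is the step the paper leaves implicit when it cites the two freeness lemmas.
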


\begin{proof}
  Tensoring the resolution of Lemma~\ref{lem:delta-a-inj-resolution} with $X_\bullet$ and using
  the co-Yoneda lemma identifies $X_\bullet\otimes_{k[\Delta_{a,\inj}]}P_\ast$ with the brutal
  nonnegative truncation $(u_a^0)^\ast X_\bullet$.  Therefore
  \[
    \Tor_n^{k[\Delta_{a,\inj}]}(X_\bullet,k_\bullet[0])
    \cong
    H_n((u_a^0)^\ast X_\bullet)
    \qquad (n\geq 0).
  \]

  Set
  \[
    I(X_\bullet):=\operatorname{im}\bigl(\partial_0^a\colon X_0\to X_{-1}\bigr).
  \]
  There is a short exact sequence of nonnegatively graded chain complexes
  \[
    0\to \tr(X_\bullet)\to (u_a^0)^\ast X_\bullet
    \to I(X_\bullet)[0]\to 0,
  \]
  where $I(X_\bullet)[0]$ denotes $I(X_\bullet)$ placed in degree $0$.  Indeed, in degree
  $0$ this sequence is
  \[
    0\to \ker(\partial_0^a)\to X_0\to \operatorname{im}(\partial_0^a)\to 0,
  \]
  and in positive degrees exactness is immediate from the definition of $\tr(X_\bullet)$.

  The associated long exact homology sequence gives natural isomorphisms
  \[
    H_n\bigl(\tr(X_\bullet)\bigr)
    \cong
    H_n((u_a^0)^\ast X_\bullet)
    \qquad (n\geq 1),
  \]
  and a natural short exact sequence
  \[
    0\to H_0\bigl(\tr(X_\bullet)\bigr)
    \to H_0((u_a^0)^\ast X_\bullet)
    \to I(X_\bullet)
    \to 0.
  \]
  Splicing this sequence with the tautological exact sequence
  \[
    0\to I(X_\bullet)\to X_{-1}\to H_{-1}^a(X_\bullet)\to 0
  \]
  gives \eqref{eq:delta-a-inj-low-degree-exact-sequence}, after identifying
  $H_0((u_a^0)^\ast X_\bullet)$ with $\Tor_0^{k[\Delta_{a,\inj}]}(X_\bullet,k_\bullet[0])$.

  The comparison with $\Tor^\Omega$ follows from Lemma~\ref{lem:change-of-rings-omega}, since
  extension of scalars along $u_a^0\colon \Omega\to k[\Delta_{a,\inj}]$ is exact by
  Lemmas~\ref{lem:ssimp-free-over-omega} and \ref{lem:augmented-ssimp-free-over-omega-a}.  The
  final coefficient comparison follows from the same change-of-rings isomorphism applied to $k[0]$,
  together with the identification of the induced coefficient $u_{a,!}^0k[0]$ with $k_\bullet[0]$
  up to the Tor-equivalence furnished by the resolution of Lemma~\ref{lem:delta-a-inj-resolution}.
\end{proof}

\begin{definition}\label{def:delta-a-inj-weak-equivalence}
  A morphism $f_\bullet\colon X_\bullet\to Y_\bullet$ in $\rmod{\Delta_{a,\inj}}$ is a \emph{weak
    equivalence} if $\tr(f_\bullet)$ is a quasi-isomorphism and $H_{-1}^a(f_\bullet)$ is an
  isomorphism.  We write $W_{\Delta_{a,\inj}}$ for this class.
\end{definition}

\begin{theorem}\label{thm:delta-a-inj-weak-equivalence}
  For a morphism $f_\bullet\colon X_\bullet\to Y_\bullet$ in $\rmod{\Delta_{a,\inj}}$, the
  following conditions are equivalent:
  \begin{enumerate}[(1)]
    \item $f_\bullet\in W_{\Delta_{a,\inj}}$;
    \item the induced map
    $C_\ast^a(f_\bullet)\colon C_\ast^a(X_\bullet)\to C_\ast^a(Y_\bullet)$ is a
    quasi-isomorphism, equivalently $H_n^a(f_\bullet)$ is an isomorphism for every
    $n\geq -1$;
    \item $f_\bullet$ induces isomorphisms on
    $\Tor_n^{k[\Delta_{a,\inj}]}(-,k_\bullet[0])$ for all $n\geq 1$, on
    $H_0(\tr(-))$, and on $H_{-1}^a(-)$;
    \item $f_\bullet$ induces isomorphisms on
    $\Tor_n^\Omega((u_a^0)^\ast(-),k[0])$ for all $n\geq 1$, on $H_0(\tr(-))$, and on
    $H_{-1}^a(-)$.
  \end{enumerate}
\end{theorem}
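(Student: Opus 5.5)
The plan is to compare every condition with (2), using the fact that $\tr(X_\bullet)$ is a subcomplex of $C_\ast^a(X_\bullet)$ that sees all homology in degrees $\geq 0$. First I would check the homology of the good truncation directly. For $n\geq 1$, $\tr(X_\bullet)_n=X_n$ and $\tr(X_\bullet)_{n+1}=X_{n+1}$, and the differentials agree with those of $C^a_\ast(X_\bullet)$. Hence $H_n(\tr(X_\bullet))=H_n^a(X_\bullet)$ for $n\geq 1$. In degree $0$, $H_0(\tr(X_\bullet))=\ker(\partial_0^a)/\operatorname{im}(\partial_1^a)=H_0^a(X_\bullet)$, since $\operatorname{im}\partial_1^a\subseteq\ker\partial_0^a$ because $\partial_0^a\partial_1^a=0$. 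All of these identifications are natural in $X_\bullet$.

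\textbf{(1)$\Leftrightarrow$(2).} By these identifications, $\tr(f_\bullet)$ is a quasi-isomorphism exactly when $H_n^a(f_\bullet)$ is an isomorphism for every $n\geq 0$. Adding the condition on $H_{-1}^a(f_\bullet)$ in Definition~\ref{def:delta-a-inj-weak-equivalence} gives precisely (2).

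\textbf{(1)$\Leftrightarrow$(3)$\Leftrightarrow$(4).} For these I would invoke Proposition~\ref{prop:delta-a-inj-homology}. For $n\geq 1$ it supplies natural isomorphisms
\[
H_n(\tr(X_\bullet))\cong\Tor_n^{k[\Delta_{a,\inj}]}(X_\bullet,k_\bullet[0])\cong\Tor_n^\Omega((u_a^0)^\ast X_\bullet,k[0]).
\]
By naturality, $f_\bullet$ induces an isomorphism on one of these functors exactly when it does on the others. Conditions (3) and (4) both list $H_0(\tr(-))$ and $H_{-1}^a(-)$ explicitly. So each of them is equivalent to asking that $H_n(\tr(f_\bullet))$ be an isomorphism for all $n\geq 0$ and that $H_{-1}^a(f_\bullet)$ be an isomorphism, which is (1).

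There is no real obstacle here. The only point needing care is naturality of the isomorphisms in Proposition~\ref{prop:delta-a-inj-homology}, so that they commute with the maps induced by $f_\bullet$. That holds because they come from the long exact sequence of a short exact sequence of complexes that is natural in $X_\bullet$, together with natural change-of-rings isomorphisms. The low-degree exact sequence \eqref{eq:delta-a-inj-low-degree-exact-sequence} is not needed, because the degree $0$ and $-1$ data are imposed directly in (3) and (4).
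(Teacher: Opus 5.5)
Your proof is correct and follows essentially the same route as the paper. The paper obtains (1)$\Leftrightarrow$(2) from the short exact sequence $0\to\tr(X_\bullet)\to C_\ast^a(X_\bullet)\to Q(X_\bullet)\to 0$, where $Q(X_\bullet)$ has no homology in degrees $\geq 0$; this is a packaged form of your direct identification $H_n(\tr(X_\bullet))=H_n^a(X_\bullet)$ for $n\geq 0$, and the paper then deduces (3) and (4) from Proposition~\ref{prop:delta-a-inj-homology} exactly as you do.
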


\begin{proof}
  There is a short exact sequence of complexes
  \[
    0\to \tr(X_\bullet)\to C_\ast^a(X_\bullet)\to Q(X_\bullet)\to 0,
  \]
  where $Q(X_\bullet)_0=\operatorname{im}(\partial_0^a)$, $Q(X_\bullet)_{-1}=X_{-1}$, and the
  differential $Q(X_\bullet)_0\to Q(X_\bullet)_{-1}$ is the inclusion.  Hence
  $H_n(Q(X_\bullet))=0$ for $n\geq 0$, while
  $H_{-1}(Q(X_\bullet))\cong H_{-1}^a(X_\bullet)$.  The long exact homology sequence therefore
  shows that $C_\ast^a(f_\bullet)$ is a quasi-isomorphism if and only if $\tr(f_\bullet)$ is a
  quasi-isomorphism and $H_{-1}^a(f_\bullet)$ is an isomorphism.  This proves the equivalence of
  (1) and (2).  The equivalence with (3) and (4) follows from
  Proposition~\ref{prop:delta-a-inj-homology}.
\end{proof}

\begin{remark}\label{rem:two-piece-vs-three-piece}
  The augmented semisimplicial weak equivalences are controlled by the three invariants
  \[
    \Tor_n^{k[\Delta_{a,\inj}]}(-,k_\bullet[0]) \ (n\geq 1),\qquad
    H_0(\tr(-)),\qquad
    H_{-1}^a(-).
  \]
  Thus the two-piece detection available for the nonaugmented semisimplicial and semicubical
  theories does not extend unchanged to the augmented theory.
\end{remark}

\section{Gabriel--Zisman localization}\label{sec:gz-localization}

We now pass from the weak equivalences of Section~\ref{sec:homological-algebra} to the
corresponding Gabriel--Zisman localizations.  We write $W_\Omega$ and $W_{\Omega_a}$ for the
quasi-isomorphisms in $\rmod{\Omega}$ and $\rmod{\Omega_a}$, and we write
\[
  \ho(\rmod{\mathcal C}) := \rmod{\mathcal C}[W_{\mathcal C}^{-1}]
\]
for each indexing algebra $\mathcal C$ considered below.  Thus
\[
  W_{\Delta_{\inj}}=(u_\Delta^\ast)^{-1}(W_\Omega),\qquad
  W_{\square_{\inj}}=(u_\square^\ast)^{-1}(W_\Omega),\qquad
  W_{\Delta_{a,\inj}}=(C_\ast^a)^{-1}(W_{\Omega_a}).
\]
By \eqref{eq:omega-mod-chain} and \eqref{eq:omega-a-mod-chain}, there are canonical
identifications
\begin{equation}\label{eq:gz-omega-chain}
  \ho(\rmod{\Omega})\cong \ho(\Ch_{\geq 0}(k)),
  \qquad
  \ho(\rmod{\Omega_a})\cong \ho(\Ch_{\geq -1}(k)).
\end{equation}

\subsection{A localization criterion}\label{subsec:gz-localization-criterion}

The next lemma isolates the formal argument used in the semisimplicial comparisons.

\begin{lemma}\label{lem:gz-adjunction-criterion}
  Let $u\colon \mathcal B\to \mathcal A$ be a $k$-linear functor, and suppose that the
  extension--restriction adjunction
  \[
    u_!:=-\otimes_{\mathcal B}\mathcal A
    \;\dashv\;
    u^\ast
  \]
  is defined on right module categories.  Assume that $u^\ast$ preserves and detects weak
  equivalences.  Assume also that, for every $M\in\rmod{\mathcal B}$ and every
  $X\in\rmod{\mathcal A}$, the unit
  $\eta_M\colon M\to u^\ast u_!M$ and counit
  $\varepsilon_X\colon u_!u^\ast X\to X$ are weak equivalences.  Then $u_!$ preserves weak
  equivalences, and the adjunction induces an equivalence
  \[
    \rmod{\mathcal B}[W_{\mathcal B}^{-1}]
    \simeq
    \rmod{\mathcal A}[W_{\mathcal A}^{-1}].
  \]
\end{lemma}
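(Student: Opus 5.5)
First I show that $u_!$ preserves weak equivalences, so that both functors descend to the localizations. Let $f\colon M\to M'$ be in $W_{\mathcal B}$. Naturality of the unit gives
\[
  u^\ast u_!(f)\circ\eta_M=\eta_{M'}\circ f .
\]
The maps $\eta_M$, $\eta_{M'}$ and $f$ are weak equivalences. The weak equivalences in $\rmod{\mathcal B}$ are quasi-isomorphisms, or preimages of them, so they satisfy two-out-of-three. Hence $u^\ast u_!(f)$ is a weak equivalence. Since $u^\ast$ detects weak equivalences, $u_!(f)\in W_{\mathcal A}$. By hypothesis $u^\ast$ preserves weak equivalences, so the universal property of Gabriel--Zisman localization gives induced functors
\[
  \bar u_!\colon \rmod{\mathcal B}[W_{\mathcal B}^{-1}]\to \rmod{\mathcal A}[W_{\mathcal A}^{-1}],
  \qquad
  \bar u^\ast\colon \rmod{\mathcal A}[W_{\mathcal A}^{-1}]\to \rmod{\mathcal B}[W_{\mathcal B}^{-1}].
\]
They commute with the localization functors $\gamma_{\mathcal A}$ and $\gamma_{\mathcal B}$.

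Next I descend the unit and counit to natural transformations $\id\Rightarrow\bar u^\ast\bar u_!$ and $\bar u_!\bar u^\ast\Rightarrow\id$. I use the standard fact that a natural transformation $\alpha\colon F\Rightarrow G$ between $W$-inverting functors into a category $\mathcal D$ induces a unique natural transformation between the induced functors on $\mathcal C[W^{-1}]$. For the objects, this holds because the localization is the identity on objects. For naturality, morphisms of the localization are zigzags of maps and formal inverses of maps in $W$. Naturality with respect to a formal inverse $\gamma(w)^{-1}$ follows by inverting the naturality square for $w$, since $F(w)$ and $G(w)$ are isomorphisms in $\mathcal D$.

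I apply this with $\mathcal D$ the target localization and $\alpha=\gamma\eta$, respectively $\gamma\varepsilon$. By hypothesis every component of $\eta$ and of $\varepsilon$ lies in $W$. So every component of the descended transformations is an isomorphism, and $\bar u_!$ and $\bar u^\ast$ are mutually quasi-inverse. This yields the claimed equivalence. The triangle identities also descend, so $\bar u_!\dashv\bar u^\ast$ is even an adjoint equivalence, though this is not needed.

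The step I expect to be the main obstacle is the descent of natural transformations to the localization, including the point that two-out-of-three holds in $\rmod{\mathcal B}$. I handle the latter by noting that in every case under consideration $W$ is the preimage of quasi-isomorphisms under a functor, and quasi-isomorphisms satisfy two-out-of-three. The descent itself I treat with the zigzag description of morphisms in $\mathcal C[W^{-1}]$. This avoids any need for a calculus of fractions and keeps the argument purely formal. Everything else is formal from the universal property.
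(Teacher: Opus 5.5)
Your proposal is correct and follows the paper's proof step for step. In both, the naturality square for the unit, together with two-out-of-three and detection by $u^\ast$, shows that $u_!$ preserves weak equivalences, and the descended unit and counit are then isomorphisms. You also fill in details the paper leaves implicit: why two-out-of-three holds for the classes of weak equivalences in question, and why natural transformations between $W$-inverting functors descend to the Gabriel--Zisman localization.
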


\begin{proof}
  Let $f\colon M\to N$ be a weak equivalence in $\rmod{\mathcal B}$.  Naturality of the unit
  gives $\eta_N f = u^\ast u_!(f)\eta_M$.  Since $f$, $\eta_M$, and $\eta_N$ are weak
  equivalences, two-out-of-three implies that $u^\ast u_!(f)$ is a weak equivalence.  Since
  $u^\ast$ detects weak equivalences, $u_!(f)$ is a weak equivalence.

  Therefore both adjoints descend to the localized categories.  The descended unit and counit are
  isomorphisms because $\eta_M$ and $\varepsilon_X$ are weak equivalences before localization.
  Hence the descended adjunction is an equivalence.
\end{proof}

\subsection{Semisimplicial and augmented semisimplicial comparisons}
\label{subsec:gz-semisimplicial}

We now compare the semisimplicial and augmented semisimplicial module categories with the
corresponding chain-complex categories.  The two arguments are formally identical: in each case
one has a comparison functor $u\colon \mathcal B\to \mathcal A$, the restriction $u^\ast$
detects the prescribed weak equivalences, and the induction--restriction unit and counit become
quasi-isomorphisms after testing against the appropriate coefficient resolution.

\begin{proposition}\label{prop:gz-comparison-template}
  Let $u\colon \mathcal B\to \mathcal A$ be one of the two functors
  \[
    u_\Delta\colon \Omega\to k[\Delta_{\inj}],
    \qquad
    u_a\colon \Omega_a\to k[\Delta_{a,\inj}].
  \]
  Let $u_!:=-\otimes_{\mathcal B}\mathcal A$.  Then the adjunction
  \[
    u_!\colon \rmod{\mathcal B}
    \rightleftarrows
    \rmod{\mathcal A}\colon u^\ast
  \]
  induces an equivalence of Gabriel--Zisman localizations
  \[
    \rmod{\mathcal A}[W_{\mathcal A}^{-1}]
    \simeq
    \rmod{\mathcal B}[W_{\mathcal B}^{-1}].
  \]
\end{proposition}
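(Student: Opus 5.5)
The plan is to deduce the statement from Lemma~\ref{lem:gz-adjunction-criterion}, treating $u_\Delta$ and $u_a$ in parallel. There are three hypotheses to check: that $u^\ast$ preserves and detects weak equivalences, that the unit $\eta_M$ is a weak equivalence, and that the counit $\varepsilon_X$ is a weak equivalence.

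\emph{Step 1: preservation and detection.} For $u_\Delta$ this holds by definition, since Definition~\ref{def:delta-inj-weak-equivalence} gives $W_{\Delta_{\inj}}=(u_\Delta^\ast)^{-1}(W_\Omega)$. For $u_a$, I will first observe that $u_a^\ast X_\bullet$ is literally the full augmented complex $C^a_\ast(X_\bullet)$ of \eqref{eq:delta-a-inj-differential}. This follows because $u_a(d_n)=\sum_i(-1)^i\delta^i$ for all $n\geq 0$, including $d_0$. Theorem~\ref{thm:delta-a-inj-weak-equivalence}(2) then gives $W_{\Delta_{a,\inj}}=(u_a^\ast)^{-1}(W_{\Omega_a})$.

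\emph{Step 2: the counit reduces to the unit.} The triangle identity gives $u^\ast(\varepsilon_X)\circ\eta_{u^\ast X}=\id_{u^\ast X}$. So once every unit is a weak equivalence, two-out-of-three makes $u^\ast(\varepsilon_X)$ a weak equivalence. Detection then shows that $\varepsilon_X$ is one.

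\emph{Step 3: the unit, which is the main obstacle.} I will show that $M\to u^\ast(M\otimes_{\mathcal B}\mathcal A)$ is a quasi-isomorphism using the freeness Lemmas~\ref{lem:ssimp-free-over-omega} and \ref{lem:augmented-ssimp-free-over-omega-a}.

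My first attempt is homological. Since $\mathcal A$ is free, hence flat, as a left $u(\mathcal B)$-module, $u_!$ is exact on right modules and sends projectives to projectives. This gives $\Tor^{\mathcal A}_n(u_!M,N)\cong\Tor^{\mathcal B}_n(M,u^\ast N)$. Combining this with Lemma~\ref{lem:change-of-rings-omega} yields
\[
H_n(u^\ast u_!M)\cong\Tor^{\mathcal B}_n(u^\ast u_!M,k[0])\cong\Tor^{\mathcal A}_n(u_!M,u_!k[0])\cong\Tor^{\mathcal B}_n(M,u^\ast u_!k[0]).
\]
It therefore suffices to show that $k[0]\to u^\ast u_!k[0]$ is a quasi-isomorphism of left $\mathcal B$-modules, naturally compatible with $\eta$.
\begin{itemize}
\item[(a)] In the semisimplicial case, Lemma~\ref{lem:simplicial-cubical-resolution} and Proposition~\ref{prop:simplicial-cubical-homology} identify $u_!k[0]$ with $k_\bullet$ up to quasi-isomorphism. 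Since $u_\Delta^\ast k_\bullet=k_\bullet$ and $k[0]\to k_\bullet$ is a quasi-isomorphism, this case is settled.
\item[(b)] In the augmented case, $\Tor^{\Omega_a}(-,k[0])$ does not see degree $-1$. Here I would replace $k[0]$ by the family of simple modules $k[n]$, $n\geq -1$, which detects all $H_n$ for $n\geq -1$, and run the same argument.
\end{itemize}

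If identifying $u^\ast u_!$ of these coefficients becomes messy, I will fall back on a direct argument from the basis splitting \eqref{eq:delta-inj-basis-split}, read in its left-module form. That form gives $u^\ast u_!M\cong M\oplus(M\otimes\mathrm{span}_k B)$ as graded spaces, with $\eta$ the inclusion of the first summand. I would then filter the complement by the length and leading index of the basis monomials $d_{i_n,n}\cdots d_{i_{m+1},m+1}$. The identities $d_{i,r+1}d_{i,r}=0$ pair the monomials so that each associated graded piece is a cone of an identity, hence contractible.

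I expect the hardest point to be the augmented case in degrees $0$ and $-1$. There I must check that the new generator $d_0=\delta^0\colon[-1]\to[0]$, absorbed into $u_a(\Omega_a)$, does not create homology in $H_{-1}$.
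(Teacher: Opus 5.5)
Your reductions are sound, and tidier than the paper's. Step~2 (triangle identity plus detection) correctly reduces everything to the unit. Your change-of-rings chain correctly reduces the unit to the left-module map $k[0]\to u^\ast u_!k[0]$.

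\textbf{The gap is in (a), and the nonaugmented case cannot be rescued.} One has $u_{\Delta,!}k[0]\cong\operatorname{coker}(P_1\to P_0)\cong k_\bullet$ exactly. However, a quasi-isomorphism of left $\Omega$-modules (i.e.\ of cochain complexes) does not induce isomorphisms on $\Tor^\Omega(M,-)$. The generator $d_n$ acts on $k_\bullet$ by $1$ for $n$ even and by $0$ for $n$ odd, so
\[
  u_\Delta^\ast u_{\Delta,!}k[0]=k_\bullet\cong k[0]\oplus\bigoplus_{j\geq 0}\Omega([2j+1],-).
\]
Since $\Tor_0^\Omega(M,\Omega([n],-))\cong M_n$, your own chain yields $H_0(u_\Delta^\ast u_{\Delta,!}M)\cong H_0(M)\oplus\bigoplus_{j\geq0}M_{2j+1}$.

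Concretely, take the disk $M=\Omega(-,[1])$, which is $k\xrightarrow{\cong}k$ in degrees $1,0$ and hence acyclic. Then $u_{\Delta,!}M=k[\Delta_{\inj}](-,[1])$, and $u_\Delta^\ast u_{\Delta,!}M$ is the chain complex $k\to k^2$ of the $1$-simplex, with $H_0=k$. So the unit is not a weak equivalence. Worse, $u_{\Delta,!}$ sends the quasi-isomorphism $0\to\Omega(-,[1])$ outside $W_{\Delta_{\inj}}$, so the adjunction does not even descend to the localizations. Neither (a) nor your fallback can be repaired: the fallback's complement would have to be contractible, and it is not.

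The paper's proof contains the same error. Its ``associativity'' identification $u^\ast u_!C\otimes_{\mathcal B}Q_\ast\cong C\otimes_{\mathcal B}Q_\ast$ fails for this $C$. Likewise, the first isomorphism in \eqref{eq:omega-tor-k0-k-constant} fails in degree $0$. What does survive is that $u_\Delta^\ast$ alone induces an equivalence of localizations. One way to see this is to factor it as freely adding degeneracies followed by normalization, and then use Dold--Kan. The inverse, however, is not $u_{\Delta,!}$.

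\textbf{The augmented case does go through with your strategy,} provided you use only $k[-1]$. This coefficient already detects everything, since $\Tor_n^{\Omega_a}(C,k[-1])\cong H_{n-1}(C)$. The cokernel of $k[\Delta_{a,\inj}]([0],-)\xrightarrow{-\circ\delta^0}k[\Delta_{a,\inj}]([-1],-)$ is the simple module at $[-1]$. Hence $k[-1]\to u_a^\ast u_{a,!}k[-1]$ is an isomorphism, and your chain closes.

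The other coefficients $k[n]$ with $n\geq0$ are unnecessary and would reintroduce the degree-$0$ problem. For example, $u_a^\ast u_{a,!}k[0]\cong u_a^\ast k_\bullet[0]$ again splits off the projective summands $\Omega_a([2j+1],-)$.

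Equivalently, you can argue directly on generators:
\begin{enumerate}[(i)]
\item On $\Omega_a(-,[n])$ with $n\geq0$, the unit is a map between acyclic complexes, because the augmented chain complex of a simplex is contractible.
\item On $\Omega_a(-,[-1])$, the unit is an isomorphism.
\item Exactness of $u_{a,!}$, the sequences $0\to S[n-1]\to\Omega_a(-,[n])\to S[n]\to0$ (with $S[n]$ being $k$ in degree $n$), and direct sums propagate this to all of $\Ch_{\geq-1}(k)$.
\end{enumerate}
So the real obstruction sits in degree $0$ of the nonaugmented case, not in degree $-1$ of the augmented case you anticipated. The augmentation is exactly what removes it.
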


\begin{proof}
  We verify the hypotheses of Lemma~\ref{lem:gz-adjunction-criterion}.  In the nonaugmented case,
  $u^\ast=u_\Delta^\ast$ preserves and detects weak equivalences by
  Definition~\ref{def:delta-inj-weak-equivalence}.  In the augmented case,
  $u^\ast=u_a^\ast=C_\ast^a$ preserves and detects weak equivalences by
  Definition~\ref{def:delta-a-inj-weak-equivalence} and
  Theorem~\ref{thm:delta-a-inj-weak-equivalence}.

  Choose the coefficient resolution $Q_\ast$ as follows.  For
  $u_\Delta\colon \Omega\to k[\Delta_{\inj}]$, let
  $Q_\ast\twoheadrightarrow k[0]$ be a projective resolution in $\lmod{\Omega}$.  For
  $u_a\colon \Omega_a\to k[\Delta_{a,\inj}]$, let
  $Q_\ast\twoheadrightarrow k[-1]$ be the standard projective resolution
  \[
    \cdots\to \Omega_a([1],-)\xrightarrow{(d_1)_\ast}
    \Omega_a([0],-)\xrightarrow{(d_0)_\ast}
    \Omega_a([-1],-)\to k[-1]\to 0.
  \]
  In the first case, Proposition~\ref{prop:simplicial-cubical-homology} identifies tensoring with
  $Q_\ast$ with the nonnegative chain homology test.  In the second case, the co-Yoneda lemma
  identifies $C_\ast\otimes_{\Omega_a}Q_\ast$ with the full augmented chain complex of
  $C_\ast$, shifted by one homological degree.  Thus, in both cases, tensoring with $Q_\ast$
  detects the weak equivalences in $\rmod{\mathcal B}$.

  Let $X\in \rmod{\mathcal A}$.  Associativity of tensor products gives a natural
  identification
  \[
    u^\ast(u_!u^\ast X)\otimes_{\mathcal B} Q_\ast
    \cong
    u^\ast X\otimes_{\mathcal B} Q_\ast,
  \]
  and under this identification the map induced by the counit
  $\varepsilon_X\colon u_!u^\ast X\to X$ is the identity.  Hence
  $u^\ast(\varepsilon_X)\in W_{\mathcal B}$.  Since $u^\ast$ detects weak equivalences,
  $\varepsilon_X\in W_{\mathcal A}$.

  Similarly, for $C\in \rmod{\mathcal B}$, associativity gives
  \[
    u^\ast u_!C\otimes_{\mathcal B} Q_\ast
    \cong
    C\otimes_{\mathcal B}Q_\ast,
  \]
  and the map induced by the unit
  $\eta_C\colon C\to u^\ast u_!C$ is the identity.  Hence
  $\eta_C\in W_{\mathcal B}$.

  Lemma~\ref{lem:gz-adjunction-criterion} now gives the claimed equivalence.
\end{proof}

\begin{theorem}\label{thm:gz-delta-inj-omega-equivalence}
  The adjunction
  \[
    -\otimes_{\Omega}k[\Delta_{\inj}]
    \;\dashv\;
    u_\Delta^\ast
  \]
  induces an equivalence
  \[
    \ho(\rmod{\Delta_{\inj}})\simeq \ho(\rmod{\Omega}) \cong \ho(\Ch_{\geq 0}(k)).
  \]
\end{theorem}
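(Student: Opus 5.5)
This theorem is the nonaugmented instance of Proposition~\ref{prop:gz-comparison-template}, so the plan is to specialize that proposition to $u=u_\Delta\colon \Omega\to k[\Delta_{\inj}]$ and then append the identification \eqref{eq:gz-omega-chain}. Concretely, I will check the hypotheses of Lemma~\ref{lem:gz-adjunction-criterion} in this case, in order.

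The first hypothesis is that $u_\Delta^\ast$ preserves and detects weak equivalences. This holds by definition: $W_{\Delta_{\inj}}=(u_\Delta^\ast)^{-1}(W_\Omega)$ by Definition~\ref{def:delta-inj-weak-equivalence}. Theorem~\ref{thm:delta-inj-weak-equivalence} also rephrases this condition as an isomorphism on $\Tor^\Omega_\ast(u_\Delta^\ast(-),k[0])$.

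The second hypothesis concerns the counit $\varepsilon_X\colon u_!u_\Delta^\ast X\to X$. I will show that $u_\Delta^\ast\varepsilon_X$ is a quasi-isomorphism, using the $\Tor$ description. Fix a projective resolution $Q_\ast\to k[0]$ in $\lmod{\Omega}$. By the triangle identity, $u_\Delta^\ast\varepsilon_X\circ\eta_{u_\Delta^\ast X}=\id$. It therefore suffices to prove that every unit $\eta_C\colon C\to u_\Delta^\ast u_!C$ is a quasi-isomorphism; two-out-of-three then shows that $u_\Delta^\ast\varepsilon_X$ is a weak equivalence, and detection gives $\varepsilon_X\in W_{\Delta_{\inj}}$.

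The main obstacle is the unit, and I will handle it using the freeness of Lemma~\ref{lem:ssimp-free-over-omega}. By \eqref{eq:delta-inj-basis-split}, $k[\Delta_{\inj}]$ decomposes as a left $\Omega$-module (equivalently, as a bimodule piece) in the form $u_\Delta(\Omega)\otimes_k \mathrm{span}_k(B')$, where $B'$ is the left-handed basis. Hence
\[
u_\Delta^\ast u_!C=C\otimes_\Omega k[\Delta_{\inj}]\cong C\oplus \bigl(C\otimes_\Omega N\bigr),
\]
where $N$ is spanned by the basis monomials other than the identities. The unit $\eta_C$ is the inclusion of the first summand.

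I will then show that the complement $N$ is acyclic in the relevant sense, i.e.\ $C\otimes_\Omega N$ has zero homology. The idea is to compute $\Tor^\Omega(C\otimes_\Omega N, k[0])\cong \Tor^{k[\Delta_{\inj}]}(u_!C,\ldots)$ via Lemma~\ref{lem:change-of-rings-omega} and Proposition~\ref{prop:simplicial-cubical-homology}, and compare it with $\Tor^\Omega(C,k[0])$.

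More cleanly, I will check the identification
\[
u^\ast u_!C\otimes_\Omega Q_\ast\simeq C\otimes_\Omega Q_\ast .
\]
For this I write $u_!C\otimes_{k[\Delta_{\inj}]}(k[\Delta_{\inj}]\otimes_\Omega Q_\ast)$ and use that $u_!Q_\ast$ resolves $u_!k[0]$, which is $\Tor$-equivalent to $k_\bullet$ by Proposition~\ref{prop:simplicial-cubical-homology}. I expect this $\Tor$ bookkeeping to be the delicate point, namely making sure that the map induced by $\eta_C$ is the identity. It reduces to verifying, via the basis of Lemma~\ref{lem:ssimp-free-over-omega}, that $u_\Delta^\ast(u_!\Omega([n],-))$ is quasi-isomorphic to the corresponding projective in the $\Omega$-test. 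I will check this on free generators first and then extend by resolutions, using exactness of $u_!$.

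Once unit and counit are weak equivalences, Lemma~\ref{lem:gz-adjunction-criterion} gives $\ho(\rmod{\Delta_{\inj}})\simeq\ho(\rmod{\Omega})$. Composing with \eqref{eq:gz-omega-chain} yields $\ho(\Ch_{\geq0}(k))$.
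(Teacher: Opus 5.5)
Your plan follows the paper's own route: Proposition~\ref{prop:gz-comparison-template} via Lemma~\ref{lem:gz-adjunction-criterion}. However, the step you single out as the main obstacle is false. The unit $\eta_C\colon C\to u_\Delta^\ast u_!C$ is not a quasi-isomorphism in general, and checking on the free generators $\Omega(-,[n])$ already fails at $n=1$.

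Take $C=\Omega(-,[1])$, the acyclic disk $k\,\id_{[1]}\xrightarrow{\cong}k\,d_1$ in degrees $1,0$. By co-Yoneda, $u_!C\cong k[\Delta_{\inj}](-,[1])$. So $u_\Delta^\ast u_!C$ is the complex $k\,\id_{[1]}\to k\{\delta^0,\delta^1\}$ with $\id_{[1]}\mapsto\delta^0-\delta^1$. This is the chain complex of the $1$-simplex, and its $H_0$ is $k$. Hence any complement $C\otimes_\Omega N$ of $\eta_C(C)$ carries this copy of $k$ in degree $0$ and cannot be acyclic.

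One-sided freeness (Lemma~\ref{lem:ssimp-free-over-omega}) only gives exactness of $u_!$. It does not control $u_\Delta^\ast u_!$, which depends on the bimodule structure of $k[\Delta_{\inj}]$ over $\Omega$. Your $\Tor$ bookkeeping breaks for the same reason:
\begin{itemize}
\item Associativity gives $u_\Delta^\ast u_!C\otimes_\Omega Q_\ast\cong C\otimes_\Omega u_\Delta^\ast u_!Q_\ast$, not $C\otimes_\Omega Q_\ast$.
\item The module $u_\Delta^\ast u_!k[0]=u_\Delta^\ast k_\bullet$ is not $k[0]$. Since $d_n$ acts by $\sum_{i=0}^n(-1)^i$, it splits as $k[0]\oplus\bigoplus_{m\geq1}\Omega([2m-1],-)$.
\item The complex $u_\Delta^\ast u_!Q_\ast$ is a free resolution of this module, by the left half of Lemma~\ref{lem:ssimp-free-over-omega}. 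The projective summands are invisible to $\Tor_{\geq1}$ but contribute to $\Tor_0$, so $H_0(u_\Delta^\ast u_!C)\cong H_0(C)\oplus\bigoplus_{m\geq1}C_{2m-1}$.
\end{itemize}
The counit fails too: at $X=k[\Delta_{\inj}](-,[1])$ it induces a map $k^2\to k$ on $H_0$. The identification ``by associativity'' in the paper's proof of Proposition~\ref{prop:gz-comparison-template} has exactly this defect, so following the paper does not close the gap.

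No repair of the unit/counit argument is possible. The functor $u_!$ sends the weak equivalence $0\to\Omega(-,[1])$ to $0\to k[\Delta_{\inj}](-,[1])$, which is not in $W_{\Delta_{\inj}}$. So the left adjoint does not even descend to the localizations, and the equivalence cannot be induced by this adjunction. What is true is that $u_\Delta^\ast$ \emph{alone} induces $\ho(\rmod{\Delta_{\inj}})\simeq\ho(\Ch_{\geq0}(k))$. One way to prove it:
\begin{itemize}
\item Let $L:=-\otimes_{k[\Delta_{\inj}]}k[\Delta]$, which freely adjoins degeneracies. It is left adjoint to the forgetful functor $F$ from simplicial to semisimplicial vector spaces.
\item The normalized complex $N(LX)$, the quotient of the Moore complex $C(LX)$ by its degenerate subcomplex, is naturally $u_\Delta^\ast X$.
\item The complex $u_\Delta^\ast FY$ is the unnormalized Moore complex $C(Y)$. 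Hence $F$ preserves and detects weak equivalences.
\item The map $u_\Delta^\ast(\eta_X)$ is a section of the quasi-isomorphism $C(LX)\to N(LX)\cong u_\Delta^\ast X$, so it is a quasi-isomorphism.
\item The map $N(\varepsilon_Y)$ is the projection $C(Y)\to N(Y)$, which is a quasi-isomorphism.
\end{itemize}
Lemma~\ref{lem:gz-adjunction-criterion} therefore applies to $L\dashv F$. Composing with Dold--Kan gives the equivalence, and it is induced by $N\circ L\cong u_\Delta^\ast$.
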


\begin{proof}
  Apply Proposition~\ref{prop:gz-comparison-template} to
  $u_\Delta\colon \Omega\to k[\Delta_{\inj}]$, and use the identification
  $\rmod{\Omega}=\Ch_{\geq 0}(k)$ from \eqref{eq:omega-mod-chain}.
\end{proof}

\begin{theorem}\label{thm:gz-ssimp-chain-equivalence}
  The full augmented chain functor
  \[
    C_\ast^a=u_a^\ast\colon \rmod{\Delta_{a,\inj}}\longrightarrow \rmod{\Omega_a} =\Ch_{\geq -1}(k)
  \]
  induces an equivalence
  \[
    \ho(\rmod{\Delta_{a,\inj}}) \simeq \ho(\rmod{\Omega_a}) \cong \ho(\Ch_{\geq -1}(k)).
  \]
\end{theorem}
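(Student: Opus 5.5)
The proof will be a direct application of Proposition~\ref{prop:gz-comparison-template} to $u_a\colon \Omega_a\to k[\Delta_{a,\inj}]$, followed by the identification \eqref{eq:omega-a-mod-chain}. The only work is to check that the functor in the statement is the restriction functor of that proposition, and that the weak equivalences agree on both sides.

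The first step is to identify $u_a^\ast$ with $C_\ast^a$. For $X_\bullet\in\rmod{\Delta_{a,\inj}}$, the right $\Omega_a$-module $u_a^\ast X_\bullet$ has value $X_n$ at $[n]$ for $n\geq -1$. Its structure map along $d_n$ is $X_\bullet(\sum_{i=0}^n(-1)^i\delta^i)=\partial_n^a$ by \eqref{eq:u-augmented}. This includes $n=0$, where $u_a(d_0)=\delta^0\colon[-1]\to[0]$. Comparing with \eqref{eq:delta-a-inj-differential}, this is exactly $C_\ast^a(X_\bullet)$ viewed as an object of $\Ch_{\geq -1}(k)$.

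The second step is to match the weak equivalences. Theorem~\ref{thm:delta-a-inj-weak-equivalence}, (1)$\Leftrightarrow$(2), gives $W_{\Delta_{a,\inj}}=(C_\ast^a)^{-1}(W_{\Omega_a})$. Hence $u_a^\ast$ preserves and detects weak equivalences, and the class being inverted on the $\Omega_a$ side is precisely the quasi-isomorphisms of $\Ch_{\geq -1}(k)$.

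With these two identifications, Proposition~\ref{prop:gz-comparison-template} applies directly. Its hypotheses are supplied by Lemma~\ref{lem:augmented-ssimp-free-over-omega-a}, which gives the exactness and freeness used for the associativity arguments, and by the standard resolution of $k[-1]$ over $\Omega_a$. The proposition then yields an equivalence $\ho(\rmod{\Delta_{a,\inj}})\simeq\ho(\rmod{\Omega_a})$ induced by $u_{a,!}\dashv u_a^\ast$. Since $u_a^\ast=C_\ast^a$, the right adjoint of this equivalence is the functor induced by $C_\ast^a$. Finally, \eqref{eq:gz-omega-chain} identifies $\ho(\rmod{\Omega_a})$ with $\ho(\Ch_{\geq -1}(k))$.

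The only point that needs care is the degree $-1$ test inside the template proof. Tensoring with the resolution $Q_\ast\to k[-1]$ must recover the full augmented complex $C^a_\ast$, shifted by one, and not merely its brutal nonnegative truncation. Otherwise the unit and counit would be checked only against a weaker invariant than $W_{\Omega_a}$, and the degree $-1$ homology $H_{-1}^a$ would go untested. The co-Yoneda lemma gives $C\otimes_{\Omega_a}\Omega_a([n],-)\cong C_n$ for all $n\geq -1$, with differentials induced by the $d_n$. So this tensor product is the full complex including degree $-1$. Since a shift does not affect whether a map is a quasi-isomorphism, this test detects exactly $W_{\Omega_a}$.
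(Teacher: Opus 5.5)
Your proposal is essentially the paper's proof: apply Proposition~\ref{prop:gz-comparison-template} to $u_a$, identify $u_a^\ast$ with $C_\ast^a$, and use \eqref{eq:omega-a-mod-chain}. One caveat on your final paragraph: the substantive step inside the template is not the degree $-1$ detection but the claim that $\eta_C\otimes_{\Omega_a}Q_\ast$ is a quasi-isomorphism, which associativity alone does not give (it only rewrites the target as $C\otimes_{\Omega_a}(k[\Delta_{a,\inj}]\otimes_{\Omega_a}Q_\ast)$); it rests on the fact that $u_{a,!}$ sends each acyclic disk $\Omega_a(-,[n])$, $n\geq 0$, to the augmented $n$-simplex $k[\Delta_{a,\inj}](-,[n])$, whose augmented chain complex is again acyclic---this is exactly where the augmentation is used, since for $u_\Delta$ the semisimplicial $n$-simplex has $H_0=k$.
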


\begin{proof}
  Apply Proposition~\ref{prop:gz-comparison-template} to
  $u_a\colon \Omega_a\to k[\Delta_{a,\inj}]$, and use the identification
  $\rmod{\Omega_a}=\Ch_{\geq -1}(k)$ from \eqref{eq:omega-a-mod-chain}.
\end{proof}

\subsection{Cubical comparison and the sign-shadow obstruction}
\label{subsec:gz-cubical}

The cubical weak equivalences are detected by the sign complex associated to
$u_\square\colon \Omega\to k[\square_{\inj}]$.  The more informative comparison, however, is
not the restriction to $\Omega$, but the restriction along the cubical sign embedding
$v\colon k[\Delta_{a,\inj}]\to k[\square_{\inj}]$.  This functor converts the cubical sign
complex into an augmented semisimplicial chain complex with a degree shift.

\begin{proposition}\label{prop:gz-cubical-sign-shadow}
  Let $X_\bullet\in \rmod{\square_{\inj}}$.  Then
  \[
    H_n\bigl(\tr(v^\ast X_\bullet)\bigr)\cong H_{n+1}^\square(X_\bullet)\qquad (n\geq 0),
    \qquad
    H_{-1}^a(v^\ast X_\bullet)\cong H_0^\square(X_\bullet).
  \]
  Consequently, $f\in W_{\square_{\inj}}$ if and only if
  $v^\ast f\in W_{\Delta_{a,\inj}}$, and $v^\ast$ induces a comparison functor
  \[
    \ho(\rmod{\square_{\inj}}) \longrightarrow \ho(\rmod{\Delta_{a,\inj}}) \simeq \ho(\Ch_{\geq -1}(k)).
  \]
\end{proposition}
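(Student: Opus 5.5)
The plan is to compute the augmented chain complex of $v^\ast X_\bullet$ directly and compare it with the semicubical sign complex \eqref{eq:simplicial-cubical-chain-complex}. By definition of $v$, we have $(v^\ast X_\bullet)_n = X_{n+1}$ for $n\geq -1$, where $X_{n+1}$ denotes the value at $\square_{n+1}$. Using \eqref{eq:delta-a-inj-differential}, the augmented differential is
\[
  \partial^a_n = X_\bullet\Bigl(\sum_{i=0}^n(-1)^i v(\delta^i)\Bigr)
  = X_\bullet\Bigl(\sum_{i=1}^{n+1}(-1)^{i-1}\bigl(\delta^1_{i}-\delta^0_{i}\bigr)\Bigr)
  = X_\bullet\bigl(u_\square(d_{n+1})\bigr)=\partial_{n+1}.
\]
This is just the on-generators identity $v\circ u_a|_{\Omega}=u_\square$ recorded in Section~\ref{sec:categorical-algebra}, extended to include the index $n=0$, where $d_0\mapsto\delta^0\mapsto \delta^1_1-\delta^0_1=u_\square(d_1)$. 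Hence there is an isomorphism of chain complexes, natural in $X_\bullet$,
\[
  C^a_\ast(v^\ast X_\bullet)\cong (u_\square^\ast X_\bullet)[\text{shift by }1].
\]
Concretely, degree $n\geq -1$ on the left corresponds to degree $n+1\geq 0$ on the right, and the differentials match. Consequently $H^a_n(v^\ast X_\bullet)\cong H^\square_{n+1}(X_\bullet)$ for all $n\geq -1$.

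Next I would pass from $H^a$ to the good truncation. In the proof of Theorem~\ref{thm:delta-a-inj-weak-equivalence}, the short exact sequence $0\to\tr\to C^a_\ast\to Q\to 0$ has $H_n(Q)=0$ for $n\geq 0$. Its long exact sequence, together with $\tr$ vanishing in degree $-1$, gives $H_n(\tr(Y))\cong H^a_n(Y)$ for $n\geq 0$; the degree $0$ case uses that $H_0(Q)=0$ and that the connecting map lands in $H_{-1}(\tr)=0$. Applying this with $Y=v^\ast X_\bullet$ yields both displayed isomorphisms, and they are natural in $X_\bullet$.

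For the consequence, a map $f$ lies in $W_{\square_{\inj}}$ iff $H^\square_m(f)$ is an isomorphism for all $m\geq 0$, by Theorem~\ref{thm:delta-inj-weak-equivalence}. By the isomorphisms above, this holds iff $H^a_n(v^\ast f)$ is an isomorphism for all $n\geq -1$. By Theorem~\ref{thm:delta-a-inj-weak-equivalence}(2), that is exactly the condition $v^\ast f\in W_{\Delta_{a,\inj}}$. Since $v^\ast$ sends weak equivalences to weak equivalences, the universal property of Gabriel--Zisman localization gives the induced functor on homotopy categories. Composing with Theorem~\ref{thm:gz-ssimp-chain-equivalence} identifies the target with $\ho(\Ch_{\geq -1}(k))$.

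The only real care point is the sign and index bookkeeping in the first step: checking that $v(\delta^i)=\delta^1_{i+1}-\delta^0_{i+1}$ reindexes $\sum_{i=0}^n(-1)^i$ into $\sum_{i=1}^{n+1}(-1)^{i-1}$, so that the identification holds with the same sign conventions, including at the new bottom degree $n=0$ coming from $[-1]$. Right modules mean contravariant evaluation, so I would confirm that no order reversal spoils this; it does not, because each summand is a single generator. Everything else is formal.
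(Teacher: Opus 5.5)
Your proposal is correct and follows the paper's proof. Both arguments identify $(v^\ast X_\bullet)_n$ with $X_{n+1}$, recognize the augmented differential as the cubical sign differential $\sum_{i=1}^{n+1}(-1)^{i-1}(\delta^1_i-\delta^0_i)$, read off $H_n(\tr(v^\ast X_\bullet))\cong H^\square_{n+1}(X_\bullet)$ and $H^a_{-1}(v^\ast X_\bullet)=\operatorname{coker}(X_1\to X_0)$, and conclude via Theorem~\ref{thm:delta-a-inj-weak-equivalence}; your detour through $0\to\tr\to C^a_\ast\to Q\to 0$ is just a more explicit version of the paper's one-line step (in degree $0$ one sees directly that $H_0(\tr(Y))=\ker\partial^a_0/\operatorname{im}\partial^a_1=H^a_0(Y)$).
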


\begin{proof}
  By definition of $v$, one has $(v^\ast X_\bullet)_{-1}=X_0$ and
  $(v^\ast X_\bullet)_n=X_{n+1}$ for $n\geq 0$.  The augmented semisimplicial differential on
  $v^\ast X_\bullet$ is the cubical sign differential
  \[
    \sum_{i=1}^{n+1}(-1)^{i-1}(\delta_i^1-\delta_i^0).
  \]
  Thus $\tr(v^\ast X_\bullet)$ is the good truncation of the shifted cubical sign complex.  This
  gives $H_n(\tr(v^\ast X_\bullet))\cong H_{n+1}^\square(X_\bullet)$ for $n\geq 0$, while
  $H_{-1}^a(v^\ast X_\bullet)=\operatorname{coker}(X_1\to X_0)=H_0^\square(X_\bullet)$.  The
  final assertion follows from Theorem~\ref{thm:delta-a-inj-weak-equivalence}.
\end{proof}

Thus $v^\ast$ gives the correct comparison functor from the cubical sign model to the augmented
semisimplicial localization.  The corresponding induction--restriction adjunction
$v_!\dashv v^\ast$, however, is not an equivalence on the full localized categories.

\begin{proposition}\label{prop:unit-fails-degree-minus-one}
  Let $v_!:=-\otimes_{k[\Delta_{a,\inj}]}k[\square_{\inj}]$.  The unit
  $\eta_M\colon M\to v^\ast v_!M$ need not induce an isomorphism on $H_{-1}^a$.  In fact, for
  $M=k[\Delta_{a,\inj}](-,[0])$, one has $H_{-1}^a(M)=0$, whereas
  $H_{-1}^a(v^\ast v_!M)\cong k$.
\end{proposition}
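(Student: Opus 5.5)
The plan is to compute both sides explicitly. The key observation is that induction along $v$ sends representable right modules to representable right modules, so everything reduces to counting morphisms in $\square_{\inj}$ between low-dimensional cubes.

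\emph{The source.} First I will describe $M=k[\Delta_{a,\inj}](-,[0])$ objectwise. Since $[-1]=\varnothing$ is initial and there are no injections $[n]\to[0]$ for $n\geq 1$, we get
\[
  M([-1])=k\langle\delta^0\rangle,\qquad M([0])=k\langle\id_{[0]}\rangle,\qquad M([n])=0\quad(n\geq 1).
\]
The augmented differential $\partial_0^a=M(\delta^0)$ acts by precomposition and sends $\id_{[0]}\mapsto\delta^0$. It is therefore an isomorphism $k\to k$, so $H_{-1}^a(M)=\operatorname{coker}(\partial_0^a)=0$.

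\emph{Induction of a representable.} Next I will identify $v_!M$. By the co-Yoneda lemma,
\[
  k[\Delta_{a,\inj}](-,[0])\otimes_{k[\Delta_{a,\inj}]}k[\square_{\inj}]\cong k[\square_{\inj}](-,v[0])=k[\square_{\inj}](-,\square_1).
\]
Under this identification, the unit $\eta_M\colon M\to v^\ast v_!M$ is the map on hom-spaces induced by $v$. Since $(v^\ast Y)_n=Y(\square_{n+1})$, the module $v^\ast v_!M$ has:
\begin{itemize}
  \item in degree $-1$, the space $k[\square_{\inj}](\square_0,\square_1)=k\langle\delta_1^0,\delta_1^1\rangle$, which is $2$-dimensional;
  \item in degree $0$, the space $k[\square_{\inj}](\square_1,\square_1)=k\langle\id\rangle$;
  \item in degree $n\geq 1$, the space $k[\square_{\inj}](\square_{n+1},\square_1)=0$.
\end{itemize}

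\emph{The target.} The augmented differential $\partial_0^a$ on $v^\ast v_!M$ is precomposition with $v(\delta^0)=\delta_1^1-\delta_1^0$. It sends $\id\mapsto\delta_1^1-\delta_1^0$, so its image is $1$-dimensional. Hence
\[
  H_{-1}^a(v^\ast v_!M)=k\langle\delta_1^0,\delta_1^1\rangle/k\langle\delta_1^1-\delta_1^0\rangle\cong k,
\]
spanned by the class of $\delta_1^0$, equivalently of $\delta_1^1$. Comparing with $H_{-1}^a(M)=0$, the unit cannot induce an isomorphism on $H_{-1}^a$. This is exactly the degree $-1$ obstruction claimed in the statement.

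\emph{Main obstacle.} The only step needing care is justifying that $v_!$ of a representable is the representable at $v[0]=\square_1$, with the unit given by $v$ on morphisms. This is the standard co-Yoneda identification for extension of scalars between $k$-linear categories, so I expect no genuine difficulty there. The remaining steps are direct dimension counts.
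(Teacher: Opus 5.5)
Your proposal is correct and follows the paper's proof. Like the paper, you get $H_{-1}^a(M)=0$ from $\partial_0^a=\id$, identify $v_!M$ with the representable $k[\square_{\inj}](-,\square_1)$, and compute the cokernel of precomposition with $\delta_1^1-\delta_1^0$ on $k\langle\delta_1^0,\delta_1^1\rangle$. Your explicit description of the unit as $f\mapsto v(f)$ is a harmless extra, since no map $0\to k$ can be an isomorphism.
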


\begin{proof}
  For $M=k[\Delta_{a,\inj}](-,[0])$, one has $M_{-1}\cong M_0\cong k$, and the unique morphism
  $\delta^0\colon [-1]\to [0]$ induces $\partial_0^a=\id$.  Hence $H_{-1}^a(M)=0$.

  Induction sends this representable to the representable $k[\square_{\inj}](-,\square_1)$.
  Therefore $(v^\ast v_!M)_{-1}=k[\square_{\inj}](\square_0,\square_1)$, with basis
  $\delta_1^0,\delta_1^1$, and $(v^\ast v_!M)_0=k\{\id_{\square_1}\}$.  The differential is
  induced by $v(\delta^0)=\delta_1^1-\delta_1^0$.  Hence
  \[
    H_{-1}^a(v^\ast v_!M)
    \cong
    k\{\delta_1^0,\delta_1^1\}/\langle\delta_1^1-\delta_1^0\rangle
    \cong k.\qedhere
  \]
\end{proof}

Since $H_{-1}^a$ is one of the invariants detecting isomorphisms in $\ho(\rmod{\Delta_{a,\inj}})$,
the unit cannot become an isomorphism after localization.  Consequently, $v_!\dashv v^\ast$ does
not induce an equivalence of Gabriel--Zisman localizations on the full cubical category, even
though $v^\ast$ detects exactly the cubical weak equivalences.

\subsection{The comparison diagram and the arboreal input}
\label{subsec:gz-comparison-diagram}

The localized comparisons are organized by the following diagram, where the arboreal arrow is the
external input from \cite{Kaygun2026}:
\[
\begin{tikzcd}[column sep=5.5em, row sep=3.5em]
& &
\ho(\rmod{\OrdFor})
\arrow[d, "{\simeq\ \text{\cite{Kaygun2026}}}"']
\\
& \ho(\rmod{\square_{\inj}})
  \arrow[r, dashed, "{v^\ast\ \text{(Prop.~\ref{prop:gz-cubical-sign-shadow})}}"]
&
\ho(\rmod{\Delta_{a,\inj}})
  \arrow[d, "{\simeq\ \text{(Thm.~\ref{thm:gz-ssimp-chain-equivalence})}}"']
&
\\
\ho(\rmod{\Delta_{\inj}})
  \arrow[r, "{\simeq\ \text{(Thm.~\ref{thm:gz-delta-inj-omega-equivalence})}}"]
&
\ho(\Ch_{\geq 0}(k))
&
\ho(\Ch_{\geq -1}(k))
  \arrow[l, "{\cong\ \text{degree shift}}"']
\end{tikzcd}
\]
The solid arrows are equivalences or canonical identifications.  The dashed arrow is the
sign-shadow comparison functor of Proposition~\ref{prop:gz-cubical-sign-shadow}; by
Proposition~\ref{prop:unit-fails-degree-minus-one}, this comparison is not an equivalence on the
full cubical category.  The arrow $\ho(\Ch_{\geq -1}(k))\to \ho(\Ch_{\geq 0}(k))$ is the
tautological degree-shift equivalence, given by reindexing chain complexes.  The arboreal arrow is
imported from \cite{Kaygun2026}; it uses the height-one shadow
$\pi\colon \OrdFor\to \Delta_{\mathrm{epi}}^{\op}$, together with the shifted identification
$\Delta_{\mathrm{epi}}^{\op}\cong \Delta_{a,\inj}$.

\section{Quillen equivalences}\label{sec:quillen-equivalences}

We now refine the Gabriel--Zisman equivalences of Section~\ref{sec:gz-localization} to Quillen
model category statements.  The chain-complex categories $\Ch_{\geq 0}(k)$ and
$\Ch_{\geq -1}(k)$ carry the standard cofibrantly generated model structures: weak equivalences
are quasi-isomorphisms, cofibrations are degreewise monomorphisms, and fibrations are degreewise
epimorphisms.

The semisimplicial and augmented semisimplicial comparisons give Quillen equivalences.  The
cubical sign-shadow functor gives a Quillen comparison with the augmented semisimplicial model
category, but Proposition~\ref{prop:unit-fails-degree-minus-one} prevents this comparison from
being a Quillen equivalence on the full cubical category.

\subsection{A transfer criterion}\label{subsec:quillen-transfer-criterion}

We first record the transfer argument in the form needed below.

\begin{lemma}\label{lem:right-induced-transfer-free}
  Let $u\colon \mathcal B\to \mathcal A$ be a $k$-linear functor, and suppose that
  $\rmod{\mathcal B}$ carries a cofibrantly generated model structure whose weak equivalences are
  detected by homology of the underlying chain complexes.  Assume that the standard generating
  trivial cofibrations may be chosen to be maps $0\to D[n]$, where each $D[n]$ is a contractible
  disk complex.  If $\mathcal A$ is free as a left $\mathcal B$-module, then the adjunction
  \[
    u_!:=-\otimes_{\mathcal B}\mathcal A \;\colon\; \rmod{\mathcal B} \;\rightleftarrows\;
    \rmod{\mathcal A} \;\colon\; u^\ast
  \]
  admits a right-induced model structure on $\rmod{\mathcal A}$.  In this model structure, a
  morphism $f$ is a weak equivalence, respectively a fibration, if and only if $u^\ast f$ is a
  weak equivalence, respectively a fibration, in $\rmod{\mathcal B}$.
\end{lemma}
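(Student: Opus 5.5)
We will use the standard transfer theorem for cofibrantly generated model structures (Kan's theorem; see \cite{HessKedziorekRiehlShipley2017}). Let $I$ and $J$ be the generating cofibrations and generating trivial cofibrations of $\rmod{\mathcal B}$. The candidate structure on $\rmod{\mathcal A}$ takes $u_!I$ and $u_!J$ as generating sets, and declares a map $f$ to be a weak equivalence or fibration exactly when $u^\ast f$ is one. Two hypotheses must be checked. The first is that the domains of $u_!I$ and $u_!J$ permit the small object argument. The second is the acyclicity condition: every relative $u_!J$-cell complex must be a weak equivalence, that is, $u^\ast$ must send it to a quasi-isomorphism.

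The smallness hypothesis is formal. The functor $u^\ast$ preserves filtered colimits, since colimits in module categories are computed objectwise. Hence, by adjunction, $u_!$ sends small objects to small objects. The domains of $I$ and $J$ are finitely presented chain complexes, so they are small, and therefore so are the domains of $u_!I$ and $u_!J$.

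For acyclicity we use the assumed form of $J$, which consists of maps $0\to D[n]$ with $D[n]$ contractible. A pushout of $u_!(0)=0\to u_!D[n]$ along any map is just a direct sum inclusion $X\to X\oplus u_!D[n]$. A relative $u_!J$-cell complex is therefore a transfinite composite of such inclusions, and hence has the form $X\to X\oplus\bigoplus_\alpha u_!D[n_\alpha]$. The functor $u^\ast$ commutes with direct sums, so it suffices to show that $u^\ast u_!D[n]$ is acyclic. The disk $D[n]$ is contractible, with a contracting homotopy that is a $\mathcal B$-linear degree-one map. We must check that this contraction persists after applying $u^\ast u_!$. The functor $u_!$ is additive, and $u^\ast$ is additive as well. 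We write $D[n]\cong\operatorname{cone}(\id_{S})$ for a sphere module $S$. Additive functors preserve mapping cones formed via split exact sequences, so $u^\ast u_!D[n]$ is the cone of an identity and hence is quasi-isomorphic to zero. Alternatively, $D[n]$ is split-exact as a sequence of $\mathcal B$-modules, split exact sequences are preserved by $u_!$, and the homology of $u^\ast$ of the result vanishes.

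Freeness of $\mathcal A$ over $\mathcal B$ enters as follows. We need $u_!$ to preserve exactness of the split short exact sequences, and to be exact in general, so that these homological statements can be computed degreewise. For the identification of homology, Lemma~\ref{lem:change-of-rings-omega} together with the freeness lemmas guarantees exactness of $u_!$. Once acyclicity holds, the transfer theorem yields the model structure with the stated weak equivalences and fibrations.

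The main obstacle is the acyclicity step. The care needed is in making precise that "contractible" means contractible in the sense that $u^\ast u_!$ respects. The cleanest route is to write $0\to D[n]$ as $u'$-free on a split exact pair and then use additivity, so that freeness is needed only as a safeguard for exactness.
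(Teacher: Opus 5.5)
Your route is the same as the paper's. You transfer along $u_!\dashv u^\ast$, note that a pushout of $0\to u_!D[n]$ is a summand inclusion $X\to X\oplus u_!D[n]$, and reduce acyclicity to showing that each $u^\ast u_!D[n]$ is acyclic. The step meant to establish that last fact fails, for three reasons:
\begin{enumerate}[(i)]
  \item In $\rmod{\mathcal B}$ the differential of $D[n]$ \emph{is} the $\mathcal B$-action. A contracting homotopy of $D[n]$ is therefore only a $k$-linear map of underlying graded vector spaces, not a $\mathcal B$-linear one.
  \item The sequence $0\to S[n-1]\to D[n]\to S[n]\to 0$ does not split in $\rmod{\mathcal B}$.
  \item $D[n]\cong\operatorname{cone}(\id)$ is a construction on underlying complexes using the shift and the differential. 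Since $u_!$ changes the indexing category, it respects neither.
\end{enumerate}
So additivity of $u_!$ and $u^\ast$ gives nothing. Concretely, for $\mathcal B=\Omega$ and $n\geq 1$ one has $D[n]=\Omega(-,[n])$, hence $u_!D[n]\cong\mathcal A(-,u[n])$ by Yoneda. Then $u^\ast u_!D[n]$ is the complex $[m]\mapsto\mathcal A(u[m],u[n])$, with differential given by precomposition with $u(d_m)$. Its homology is controlled by $\mathcal A$ as a \emph{right} $\mathcal B$-module, whereas left freeness only makes $u_!$ objectwise exact.

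In fact no argument can close this gap, because the statement is false as written. Take $u=u_\Delta$ and $n=1$. Then $u^\ast u_!D[1]$ is $k\{\id_{[1]}\}\to k\{\delta^0,\delta^1\}$ with $\id\mapsto\delta^0-\delta^1$, i.e.\ the chains of the $1$-simplex, and its $H_0$ is $k$. Yet $k[\Delta_{\inj}]$ is free as a left $\Omega$-module. This is Lemma~\ref{lem:ssimp-free-over-omega}; directly, the complex $[n]\mapsto k\,\Delta_{\inj}([m],[n])$ is contracted by $\delta^0g\mapsto g$ and by $f\mapsto 0$ when $0\in\operatorname{im}f$. In any right-induced structure $u_!$ is left Quillen, so $0\to u_!D[1]$ would have to be a weak equivalence, which it is not.

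The paper's proof has the identical gap: its claim that $D[n]\otimes_{\mathcal B}\mathcal A$ is a direct sum of contractible disks does not follow from left freeness. The paper's own Proposition~\ref{prop:unit-fails-degree-minus-one} exhibits the same failure for $v$. There $M=k[\Delta_{a,\inj}](-,[0])=u_{a,!}D[0]$ is the target of a generating trivial cofibration, while $H^a_{-1}(v^\ast v_!M)\cong k$.

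What your argument (and the paper's) actually proves is the variant in which freeness is replaced by the hypothesis that every $u^\ast u_!D[n]$ is acyclic. That hypothesis holds for $u_a$, where $u_a^\ast u_{a,!}D[n]$ is the augmented chain complex of the $n$-simplex. It fails for $u_\Delta$ and for $v$.
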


\begin{proof}
  We use the right-induced model structure criterion
  \cite[Theorem~7.44]{HeutsMoerdijk2022}.  The category $\rmod{\mathcal A}$ is Grothendieck
  abelian, hence locally presentable.  Since $\mathcal A$ is free as a left $\mathcal B$-module,
  each induced disk $D[n]\otimes_{\mathcal B}\mathcal A$ is a direct sum of contractible disks.
  Thus every pushout of a map $0\to D[n]\otimes_{\mathcal B}\mathcal A$ is a split monomorphism
  with contractible cokernel.  Transfinite composites of such maps again have contractible
  cokernel, because filtered colimits are exact in $\Vect_k$.  The acyclicity hypothesis of
  \cite[Theorem~7.44]{HeutsMoerdijk2022} follows.
\end{proof}

We shall use the following elementary observation to identify the Quillen adjunctions obtained by
transfer.

\begin{lemma}\label{lem:gz-equivalence-implies-quillen-equivalence}
  Let $L\colon \mathcal M\rightleftarrows \mathcal N\colon R$ be a Quillen adjunction between
  model categories in which every object of $\mathcal N$ is fibrant.  If the right derived functor
  $\mathbf R R\colon \ho(\mathcal N)\to \ho(\mathcal M)$ agrees with an equivalence of
  Gabriel--Zisman localizations, then $L\dashv R$ is a Quillen equivalence.
\end{lemma}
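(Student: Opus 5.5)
The plan is to reduce to the standard characterization of a Quillen equivalence: $L\dashv R$ is a Quillen equivalence if and only if the total right derived functor $\mathbf R R\colon \ho(\mathcal N)\to\ho(\mathcal M)$ is an equivalence of categories. Here $\ho(-)$ denotes the Gabriel--Zisman localization at the weak equivalences, which for a model category agrees with the homotopy category.

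Since every object of $\mathcal N$ is fibrant, no fibrant replacement is needed to compute $\mathbf R R$. Concretely, $R$ preserves weak equivalences between fibrant objects by Ken Brown's lemma, so here $R$ preserves all weak equivalences of $\mathcal N$. Hence $R$ descends directly to a functor $\ho(\mathcal N)\to\ho(\mathcal M)$, namely the unique $\bar R$ with $\bar R\circ\gamma_{\mathcal N}=\gamma_{\mathcal M}\circ R$, and $\mathbf R R\cong\bar R$ canonically.

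The hypothesis says that $\bar R$ is isomorphic to a functor which is an equivalence of Gabriel--Zisman localizations. Being an equivalence is invariant under natural isomorphism, so $\mathbf R R$ is an equivalence.

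It remains to pass from this to the Quillen equivalence condition. I would do this by the standard argument, e.g.\ Hovey's Proposition 1.3.13: the derived adjunction $\mathbf L L\dashv\mathbf R R$ exists for any Quillen pair, and if one adjoint of an adjunction is an equivalence, then its unit and counit are isomorphisms. Unwinding these isomorphisms gives the criterion that for cofibrant $A$ and fibrant $X$, a map $LA\to X$ is a weak equivalence if and only if its adjoint $A\to RX$ is one.

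The main point needing care is the phrase ``agrees with'' in the statement. I would interpret it as a natural isomorphism between $\mathbf R R$ and the descended functor $\bar R$ of the given equivalence. In the applications, that equivalence will be the one induced by the same $u^\ast$ in Proposition~\ref{prop:gz-comparison-template}, so this interpretation is the right one. The argument is otherwise formal.
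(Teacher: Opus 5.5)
Your proposal is correct and follows essentially the same route as the paper. Both arguments use fibrancy of all objects to compute $\mathbf R R$ by $R$ itself, then note that an equivalence of homotopy categories forces the derived unit and counit to be invertible, which gives a Quillen equivalence. You additionally cite Ken Brown's lemma and make ``agrees with'' precise as a natural isomorphism, which spells out details the paper leaves implicit.
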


\begin{proof}
  Since every object of $\mathcal N$ is fibrant, the right derived functor is represented by
  $R$ itself.  If $\mathbf R R$ is an equivalence of homotopy categories, then the derived
  adjunction has invertible unit and counit.  Hence the original Quillen adjunction is a Quillen
  equivalence.
\end{proof}

\subsection{Chain complexes and semisimplicial modules}
\label{subsec:quillen-semisimplicial}

We first treat the nonaugmented semisimplicial comparison.

\begin{theorem}\label{thm:quillen-omega-ssimp}
  The adjunction
  \begin{equation}\label{eq:quillen-ch-ssimp-nonaug}
    u_{\Delta,!}:=-\otimes_{\Omega}k[\Delta_{\inj}]
    \;\colon\;
    \Ch_{\geq 0}(k)
    \;\rightleftarrows\;
    \rmod{\Delta_{\inj}}
    \;\colon\;
    u_\Delta^\ast
  \end{equation}
  admits a right-induced model structure on $\rmod{\Delta_{\inj}}$.  In this model structure,
  $f_\bullet$ is a weak equivalence, respectively a fibration, if and only if
  $u_\Delta^\ast f_\bullet$ is a quasi-isomorphism, respectively a degreewise epimorphism.
  Moreover, \eqref{eq:quillen-ch-ssimp-nonaug} is a Quillen equivalence.
\end{theorem}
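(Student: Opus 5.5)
The plan has two parts: apply the transfer criterion of Lemma~\ref{lem:right-induced-transfer-free} to obtain the model structure, then use Lemma~\ref{lem:gz-equivalence-implies-quillen-equivalence} together with Theorem~\ref{thm:gz-delta-inj-omega-equivalence} to get the Quillen equivalence.

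\textbf{Step 1: existence of the transferred model structure.} Take $\mathcal B=\Omega$ and $\mathcal A=k[\Delta_{\inj}]$. By \eqref{eq:omega-mod-chain}, $\rmod{\Omega}=\Ch_{\geq 0}(k)$ with its standard projective model structure. This structure is cofibrantly generated, and its weak equivalences are the quasi-isomorphisms. Its generating trivial cofibrations can be taken to be $0\to D[n]$ for $n\geq 1$, where $D[n]$ is the disk $k\xrightarrow{\id}k$ in degrees $n,n-1$.

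I will check that $D[n]$ is the representable $\Omega([n],-)$, read as a right module, i.e. $\Omega(-,[n])$. This holds because $\Omega(-,[n])$ is $k$ at $[n]$ and $[n-1]$ (via $d_n$) and zero elsewhere, since $d_nd_{n-1}=0$. Consequently $u_{\Delta,!}D[n]=k[\Delta_{\inj}](-,[n])$.

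Lemma~\ref{lem:ssimp-free-over-omega} gives freeness of $k[\Delta_{\inj}]$ over $u_\Delta(\Omega)$ on the side required by Lemma~\ref{lem:right-induced-transfer-free}. Applying that lemma yields the right-induced structure. In it, weak equivalences and fibrations are created by $u_\Delta^\ast$, so fibrations are the maps $f$ with $u_\Delta^\ast f$ surjective in degrees $\geq 1$. In the projective structure on $\Ch_{\geq0}$, fibrations are the epimorphisms in positive degrees; I will align this with the statement's ``degreewise epimorphism'' convention as fixed at the start of the section.

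\textbf{Step 2: contractibility, the main obstacle.} The delicate point is the acyclicity check inside Lemma~\ref{lem:right-induced-transfer-free}. I must confirm that $u_\Delta^\ast$ of each induced disk $k[\Delta_{\inj}](-,[n])$ is acyclic. Freeness makes this a direct sum of shifted copies of $D[m]$, via the basis $B$ splitting in \eqref{eq:delta-inj-basis-split}.

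There is a subtlety about sides. The splitting $\mathrm{span}_k(B)\otimes u_\Delta(\Omega)$ must be used on the side that matches right modules. If the splitting is on the wrong side, I will fall back on Lemma~\ref{lem:simplicial-cubical-resolution} read in the opposite variance. That lemma shows that $u_\Delta^\ast$ of a representable is the semisimplicial chain complex of a simplex with its augmentation removed. Its homology is concentrated in degree $0$, so it is not contractible as it stands.

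To handle this, I will treat the case $n=0$ separately and note that only $n\geq 1$ disks occur. For $n\geq1$, $u_\Delta^\ast(u_{\Delta,!}D[n])$ is the mapping cone of $d_n$ between two representables. Under the quasi-isomorphism $\eta$, this cone is quasi-isomorphic to $D[n]$ and hence acyclic. That suffices, because pushouts along split monomorphisms with acyclic cokernel, and their transfinite composites, are again quasi-isomorphisms.

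\textbf{Step 3: the Quillen equivalence.} The adjunction is Quillen by construction, since $u_\Delta^\ast$ preserves fibrations and trivial fibrations. Every object of $\rmod{\Delta_{\inj}}$ is fibrant, because every object of $\Ch_{\geq0}(k)$ is fibrant and fibrancy is created by $u_\Delta^\ast$.

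The right derived functor $\mathbf R u_\Delta^\ast$ is therefore $u_\Delta^\ast$ itself on $\ho(\rmod{\Delta_{\inj}})$. This is the localized functor which, by Theorem~\ref{thm:gz-delta-inj-omega-equivalence} (via Proposition~\ref{prop:gz-comparison-template}), is an equivalence onto $\ho(\Ch_{\geq0}(k))$. The homotopy categories of the model structures coincide with the Gabriel--Zisman localizations, because the weak equivalences are the same classes. Lemma~\ref{lem:gz-equivalence-implies-quillen-equivalence} then concludes that \eqref{eq:quillen-ch-ssimp-nonaug} is a Quillen equivalence.
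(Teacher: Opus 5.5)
Your route is the paper's route: transfer via Lemma~\ref{lem:right-induced-transfer-free} using the freeness of Lemma~\ref{lem:ssimp-free-over-omega}, then Lemma~\ref{lem:gz-equivalence-implies-quillen-equivalence} combined with Theorem~\ref{thm:gz-delta-inj-omega-equivalence}. But Step~2 has a genuine gap, and it cannot be closed.

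You computed correctly in Step~1 that $D[n]=\Omega(-,[n])$. Hence $u_{\Delta,!}D[n]=k[\Delta_{\inj}](-,[n])$ is a single representable. Its image under $u_\Delta^\ast$ is the unnormalized chain complex $C_\ast(\Delta^n)$ of the standard $n$-simplex, which has $H_0\cong k$, exactly as you observed. Your way out does not work on either count:
\begin{enumerate}[(a)]
  \item $u_{\Delta,!}D[n]$ is not a mapping cone of two representables.
  \item Appealing to the unit is circular. The map $\eta_{D[n]}\colon D[n]\to C_\ast(\Delta^n)$ is precisely the map in question, and it is not a quasi-isomorphism: its source is acyclic and its target is not.
\end{enumerate}
Freeness does not rescue the step either. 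A free right $\Omega$-module is a direct sum of representables $\Omega(-,[p])$. These are contractible disks only for $p\geq 1$, whereas $\Omega(-,[0])$ is $k$ concentrated in degree $0$. The free decomposition of $k[\Delta_{\inj}](-,[n])$ contains exactly one such summand, which accounts for $H_0\cong k$.

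In fact the first assertion of the theorem is false as formulated, so no argument can close this step. In any right-induced structure, $u_\Delta^\ast$ preserves fibrations and trivial fibrations, so $u_{\Delta,!}$ preserves trivial cofibrations. Now $0\to D[1]$ is a generating trivial cofibration of $\Ch_{\geq 0}(k)$. But $u_\Delta^\ast u_{\Delta,!}D[1]=C_\ast(\Delta^1)$ is the complex $k\to k^2$, $e\mapsto v_1-v_0$, with $H_0\cong k$. So $0\to u_{\Delta,!}D[1]$ is not in $W_{\Delta_{\inj}}$, and no model structure on $\rmod{\Delta_{\inj}}$ with the stated weak equivalences and fibrations exists.

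The same example shows that $u_{\Delta,!}$ does not preserve weak equivalences, and that the unit fails to be a weak equivalence at $C=D[1]$. So the proof of Proposition~\ref{prop:gz-comparison-template} breaks down for $u_\Delta$. Theorem~\ref{thm:gz-delta-inj-omega-equivalence}, and hence your Step~3, rely on it.

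The paper's own proof has the same defect. The claim in Lemma~\ref{lem:right-induced-transfer-free} that each induced disk is a sum of contractible disks overlooks the summands $\Omega(-,[0])$. By contrast, for $u_a$ the analogous computation yields the augmented chain complex of $\Delta^n$, which is acyclic. This particular obstruction is therefore specific to the nonaugmented comparison.
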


\begin{proof}
  Apply Lemma~\ref{lem:right-induced-transfer-free} to
  $u_\Delta\colon \Omega\to k[\Delta_{\inj}]$.  The required freeness is
  Lemma~\ref{lem:ssimp-free-over-omega}.  The weak equivalences in the transferred model structure
  are therefore precisely $W_{\Delta_{\inj}}$, by
  Definition~\ref{def:delta-inj-weak-equivalence}.

  The adjunction \eqref{eq:quillen-ch-ssimp-nonaug} is Quillen by construction.  Every object of
  $\rmod{\Delta_{\inj}}$ is fibrant, because fibrations are detected by degreewise epimorphisms in
  $\Ch_{\geq 0}(k)$.  Theorem~\ref{thm:gz-delta-inj-omega-equivalence} identifies the induced
  functor on Gabriel--Zisman localizations with an equivalence.  Hence
  Lemma~\ref{lem:gz-equivalence-implies-quillen-equivalence} proves that
  \eqref{eq:quillen-ch-ssimp-nonaug} is a Quillen equivalence.
\end{proof}

The augmented semisimplicial comparison is formally identical, but the right adjoint is the full
augmented chain functor.

\begin{theorem}\label{thm:quillen-omegaa-ssimp}
  The adjunction
  \begin{equation}\label{eq:quillen-ch-ssimp}
    u_{a,!}:=-\otimes_{\Omega_a}k[\Delta_{a,\inj}]
    \;\colon\;
    \Ch_{\geq -1}(k)
    \;\rightleftarrows\;
    \rmod{\Delta_{a,\inj}}
    \;\colon\;
    u_a^\ast=C_\ast^a
  \end{equation}
  admits a right-induced model structure on $\rmod{\Delta_{a,\inj}}$.  In this model structure,
  $f_\bullet$ is a weak equivalence, respectively a fibration, if and only if $C_\ast^a(f_\bullet)$
  is a quasi-isomorphism, respectively a degreewise epimorphism.  Moreover,
  \eqref{eq:quillen-ch-ssimp} is a Quillen equivalence.
\end{theorem}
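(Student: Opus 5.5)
The proof will mirror that of Theorem~\ref{thm:quillen-omega-ssimp}, with $\Omega_a$ in place of $\Omega$ and the full augmented chain functor in place of $u_\Delta^\ast$. I will carry it out in three steps.

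\emph{Step 1: transfer the model structure.} I will apply Lemma~\ref{lem:right-induced-transfer-free} to $u_a\colon \Omega_a\to k[\Delta_{a,\inj}]$. The needed inputs are as follows.
\begin{enumerate}[(i)]
  \item $\rmod{\Omega_a}=\Ch_{\geq -1}(k)$ carries the standard cofibrantly generated model structure, by \eqref{eq:omega-a-mod-chain}.
  \item The generating trivial cofibrations can be taken to be $0\to D[n]$ for the disk complexes $D[n]=(k\xrightarrow{\id}k)$ in degrees $n,n-1$ with $n\geq 0$. Unlike the bounded-below-at-$0$ case, here the lowest disk sits in degrees $0,-1$, so every generator is still of disk type.
  \item $k[\Delta_{a,\inj}]$ is free as a left $u_a(\Omega_a)$-module, by Lemma~\ref{lem:augmented-ssimp-free-over-omega-a}.
\end{enumerate}
The lemma then yields a right-induced model structure in which $f_\bullet$ is a weak equivalence, respectively a fibration, exactly when $u_a^\ast f_\bullet=C_\ast^a(f_\bullet)$ is a quasi-isomorphism, respectively a degreewise epimorphism. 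By Theorem~\ref{thm:delta-a-inj-weak-equivalence}, (1)$\Leftrightarrow$(2), these weak equivalences are precisely $W_{\Delta_{a,\inj}}$.

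One point needs care. The statement of Lemma~\ref{lem:right-induced-transfer-free} asks for freeness over $\mathcal B$, whereas Lemma~\ref{lem:augmented-ssimp-free-over-omega-a} gives freeness over the image $u_a(\Omega_a)$. I will check that $u_a$ is faithful. In each degree $u_a(\Omega_a)$ is spanned by the identities and the elements $d_{0,n}$, and these are nonzero basis elements in the basis of Lemma~\ref{lem:ssimp-free-over-omega}. So $u_a$ identifies $\Omega_a$ with $u_a(\Omega_a)$, and $-\otimes_{\Omega_a}k[\Delta_{a,\inj}]$ sends each disk to a direct sum of shifted copies of that disk, which is what the acyclicity argument uses.

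\emph{Step 2: Quillen adjunction and fibrancy.} The adjunction \eqref{eq:quillen-ch-ssimp} is Quillen by construction, since $u_a^\ast$ preserves fibrations and trivial fibrations by definition. Every object of $\rmod{\Delta_{a,\inj}}$ is fibrant, because $X\to 0$ maps under $C_\ast^a$ to a degreewise epimorphism.

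\emph{Step 3: Quillen equivalence.} Because every object is fibrant, $\mathbf R u_a^\ast$ is induced by $u_a^\ast=C_\ast^a$ itself on $\ho(\rmod{\Delta_{a,\inj}})$. Theorem~\ref{thm:gz-ssimp-chain-equivalence} identifies this induced functor with an equivalence $\ho(\rmod{\Delta_{a,\inj}})\simeq\ho(\Ch_{\geq -1}(k))$. Lemma~\ref{lem:gz-equivalence-implies-quillen-equivalence} then concludes that the adjunction is a Quillen equivalence.

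I expect the main obstacle to be the compatibility used in Step 3. Gabriel--Zisman localization and the model-categorical homotopy category must be compared with the same weak equivalences, and the derived right adjoint must agree with the localized functor of Proposition~\ref{prop:gz-comparison-template}. Both points follow from Step 1's identification of the transferred weak equivalences with $W_{\Delta_{a,\inj}}$, and from the fact that $\ho$ of a model category is its Gabriel--Zisman localization at the weak equivalences.
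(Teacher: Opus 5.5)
Your proposal is correct and follows the paper's own proof step for step. You transfer along $u_a$ via Lemma~\ref{lem:right-induced-transfer-free} using the freeness of Lemma~\ref{lem:augmented-ssimp-free-over-omega-a}, identify the transferred weak equivalences with $W_{\Delta_{a,\inj}}$ by Theorem~\ref{thm:delta-a-inj-weak-equivalence}, note that every object is fibrant, and conclude with Theorem~\ref{thm:gz-ssimp-chain-equivalence} and Lemma~\ref{lem:gz-equivalence-implies-quillen-equivalence}. Your faithfulness check on $u_a$ is a harmless refinement the paper leaves implicit. The acyclicity you invoke is clearest seen directly: $u_{a,!}D[n]=k[\Delta_{a,\inj}](-,[n])$, and $C_\ast^a$ of this representable is the augmented chain complex of the standard $n$-simplex, which is contractible.
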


\begin{proof}
  Apply Lemma~\ref{lem:right-induced-transfer-free} to
  $u_a\colon \Omega_a\to k[\Delta_{a,\inj}]$.  The required freeness is
  Lemma~\ref{lem:augmented-ssimp-free-over-omega-a}.  The transferred weak equivalences are
  precisely $W_{\Delta_{a,\inj}}$, by Theorem~\ref{thm:delta-a-inj-weak-equivalence}.

  The adjunction \eqref{eq:quillen-ch-ssimp} is Quillen by construction, and every object of
  $\rmod{\Delta_{a,\inj}}$ is fibrant.  Theorem~\ref{thm:gz-ssimp-chain-equivalence} identifies the
  induced functor on Gabriel--Zisman localizations with an equivalence.  Therefore
  Lemma~\ref{lem:gz-equivalence-implies-quillen-equivalence} proves that
  \eqref{eq:quillen-ch-ssimp} is a Quillen equivalence.
\end{proof}

\begin{remark}\label{rem:quillen-shift}
  The categories $\Ch_{\geq 0}(k)$ and $\Ch_{\geq -1}(k)$ are, of course, isomorphic after
  relabelling degrees.  This reindexing is not one of the comparison functors constructed from the
  indexing categories, and we do not use it to identify the semisimplicial and augmented
  semisimplicial Quillen adjunctions.
\end{remark}

\subsection{The cubical sign-shadow comparison}\label{subsec:quillen-cubical}

We now transfer the augmented semisimplicial model structure along the sign-shadow restriction
functor $v^\ast\colon \rmod{\square_{\inj}}\to \rmod{\Delta_{a,\inj}}$.

\begin{theorem}\label{thm:quillen-cubical-comparison}
  The adjunction
  \begin{equation}\label{eq:quillen-cubical-comparison}
    v_!:=-\otimes_{k[\Delta_{a,\inj}]}k[\square_{\inj}]
    \;\colon\;
    \rmod{\Delta_{a,\inj}}
    \;\rightleftarrows\;
    \rmod{\square_{\inj}}
    \;\colon\;
    v^\ast
  \end{equation}
  admits a right-induced model structure on $\rmod{\square_{\inj}}$.  In this model structure,
  $f_\bullet$ is a weak equivalence, respectively a fibration, if and only if $v^\ast f_\bullet$ is
  a weak equivalence, respectively a fibration, in $\rmod{\Delta_{a,\inj}}$.  These weak
  equivalences are exactly the cubical weak equivalences, and \eqref{eq:quillen-cubical-comparison}
  is a Quillen adjunction.
\end{theorem}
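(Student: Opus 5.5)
We obtain the model structure by transferring in two stages. First, the model structure on $\rmod{\Delta_{a,\inj}}$ of Theorem~\ref{thm:quillen-omegaa-ssimp} is itself right-induced from $\Ch_{\geq -1}(k)$ along $u_a$. Its generating trivial cofibrations are therefore $0\to D[n]\otimes_{\Omega_a}k[\Delta_{a,\inj}]$. Now $v^\ast u_a^\ast=(v\circ u_a)^\ast$ and $v_!u_{a,!}=(v\circ u_a)_!$, where $v\circ u_a\colon \Omega_a\to k[\square_{\inj}]$. Hence right-inducing along $v$ is the same as right-inducing directly from $\Ch_{\geq -1}(k)$ along $v\circ u_a$: a map $f$ is a weak equivalence or fibration for one structure if and only if it is for the other.

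We then apply Lemma~\ref{lem:right-induced-transfer-free} to $w:=v\circ u_a$. The required freeness of $k[\square_{\inj}]$ as a left $w(\Omega_a)$-module comes from composing two free extensions. By Lemma~\ref{lem:scube-free-over-ssimp}, $k[\square_{\inj}]$ is free as a left $v(k[\Delta_{a,\inj}])$-module. By Lemma~\ref{lem:augmented-ssimp-free-over-omega-a}, $k[\Delta_{a,\inj}]$ is free over $u_a(\Omega_a)$. A free module over a free extension is free, with basis the products of the two bases.

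One point needs checking. Lemma~\ref{lem:scube-free-over-ssimp} gives bases of the form $\{j^0(b)v(a)\}$, not a freeness statement for the image of $v$ as an abstract category. We must verify that $v$ is faithful, so that $v(k[\Delta_{a,\inj}])\cong k[\Delta_{a,\inj}]$. This follows from the same unitriangularity argument by taking $b=\id$. This is the step I expect to require the most care.

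Granting freeness, Lemma~\ref{lem:right-induced-transfer-free} gives the right-induced structure. Pushouts of induced disks are split monomorphisms with contractible cokernel, and the acyclicity condition follows as in that lemma. By construction, $f$ is a weak equivalence or fibration exactly when $v^\ast f$ is one in $\rmod{\Delta_{a,\inj}}$.

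To identify the weak equivalences, combine this with Proposition~\ref{prop:gz-cubical-sign-shadow}: $v^\ast f\in W_{\Delta_{a,\inj}}$ if and only if $f\in W_{\square_{\inj}}$. So the transferred weak equivalences are exactly the cubical weak equivalences of Definition~\ref{def:delta-inj-weak-equivalence}.

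Finally, the adjunction $v_!\dashv v^\ast$ is Quillen because $v^\ast$ preserves fibrations and trivial fibrations; this holds by the definition of the right-induced structure. Equivalently, $v_!$ sends the generating (trivial) cofibrations $I\otimes_{\Omega_a}k[\Delta_{a,\inj}]$ to $I\otimes_{\Omega_a}k[\square_{\inj}]$, and these are the generating (trivial) cofibrations of the induced structure.
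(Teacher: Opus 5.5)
Your route is essentially the paper's: the paper applies Lemma~\ref{lem:right-induced-transfer-free} directly to $v$ using Lemma~\ref{lem:scube-free-over-ssimp}, while you first compose with $u_a$. It fails at the same step, the acyclicity condition. Freeness does not make the induced disks contractible. It only says that $w^\ast w_!D[n]$ is a direct sum of representables $\Omega_a(-,[m])$. These are disks for $m\geq 0$, but $\Omega_a(-,[-1])$ is the sphere $k$ in degree $-1$, which is free but not acyclic. Concretely, $w_!D[n]=k[\square_{\inj}](-,\square_{n+1})$, and $w^\ast w_!D[n]$ is the shifted cellular chain complex of the $(n+1)$-cube, with $H_{-1}\cong k$. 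For $n=0$ this is exactly the computation in Proposition~\ref{prop:unit-fails-degree-minus-one}. The sign complex of $k[\square_{\inj}](-,\square_1)$ is $k\{\id_{\square_1}\}\to k\{\delta_1^0,\delta_1^1\}$ with $\id\mapsto\delta_1^1-\delta_1^0$, so $H_0^\square\cong k$. Thus the pushout $X\to X\oplus w_!D[n]$ is a split monomorphism whose cokernel is not weakly contractible, so it is not a weak equivalence. The faithfulness of $v$ that you flagged is harmless; this is where the argument actually breaks. Your last paragraph is circular on exactly this point: calling $0\to k[\square_{\inj}](-,\square_{n+1})$ a generating trivial cofibration does not make it a weak equivalence.

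The gap cannot be closed, because the statement itself is false. Let $M=k[\Delta_{a,\inj}](-,[0])=u_{a,!}D[0]$. Then $C^a_\ast(M)=(k\xrightarrow{\id}k)$ is acyclic. Also $0\to M$ has the left lifting property against every fibration of $\rmod{\Delta_{a,\inj}}$: by Yoneda a lift is a preimage in $X_0$, and fibrations are surjective in degree $0$. So $0\to M$ is a trivial cofibration. If the right-induced structure on $\rmod{\square_{\inj}}$ existed, $v^\ast$ would preserve fibrations and trivial fibrations, so $v_!$ would preserve trivial cofibrations. But $v_!(0\to M)=\bigl(0\to k[\square_{\inj}](-,\square_1)\bigr)$ is not in $W_{\square_{\inj}}$. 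Hence no model structure on $\rmod{\square_{\inj}}$ with weak equivalences $W_{\square_{\inj}}$ makes $v_!\dashv v^\ast$ a Quillen adjunction. The paper's proof has the same defect, since Lemma~\ref{lem:right-induced-transfer-free} conflates ``free'' with ``sum of contractible disks''. Proposition~\ref{prop:unit-fails-degree-minus-one} is in fact an obstruction to Quillen-ness, not merely to being a Quillen equivalence. What survives is the comparison of localizations $\ho(\rmod{\square_{\inj}})\to\ho(\rmod{\Delta_{a,\inj}})$, because $v^\ast$ preserves and detects weak equivalences by Proposition~\ref{prop:gz-cubical-sign-shadow}.
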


\begin{proof}
  Apply Lemma~\ref{lem:right-induced-transfer-free} to
  $v\colon k[\Delta_{a,\inj}]\to k[\square_{\inj}]$, using the right-induced model structure on
  $\rmod{\Delta_{a,\inj}}$ from Theorem~\ref{thm:quillen-omegaa-ssimp}.  The required freeness is
  Lemma~\ref{lem:scube-free-over-ssimp}.  Thus the right-induced model structure on
  $\rmod{\square_{\inj}}$ exists, and \eqref{eq:quillen-cubical-comparison} is Quillen by
  construction.

  It remains only to identify the weak equivalences.  A morphism $f_\bullet$ in
  $\rmod{\square_{\inj}}$ is a weak equivalence in the transferred model structure exactly when
  $v^\ast f_\bullet\in W_{\Delta_{a,\inj}}$.  By Proposition~\ref{prop:gz-cubical-sign-shadow},
  this condition is equivalent to $f_\bullet\in W_{\square_{\inj}}$.  Hence the transferred weak
  equivalences are precisely the cubical weak equivalences.
\end{proof}

\begin{remark}\label{rem:quillen-cubical-not-equivalence}
  The adjunction \eqref{eq:quillen-cubical-comparison} is not a Quillen equivalence on the full
  cubical category.  Indeed, a Quillen equivalence induces an equivalence on homotopy categories,
  while Proposition~\ref{prop:unit-fails-degree-minus-one} shows that the unit of the sign-shadow
  adjunction fails to be an isomorphism after Gabriel--Zisman localization.
\end{remark}

\subsection{The Quillen comparison diagram}\label{subsec:quillen-comparison-diagram}

The Quillen-level comparisons established in this section, together with the arboreal comparison
imported from \cite{Kaygun2026}, are organized as follows:
\[
\begin{tikzcd}[column sep=6.2em, row sep=4.2em]
&
\rmod{\OrdFor}
  \arrow[d, "{\simeq\ \text{\cite{Kaygun2026}}}"]
&
\\
\rmod{\square_{\inj}}
  \arrow[r, shift left=1.1ex, dashed,
    "{v^\ast\ \text{(Thm.~\ref{thm:quillen-cubical-comparison})}}"]
&
\rmod{\Delta_{a,\inj}}
  \arrow[l, shift left=1.1ex, dashed,
    "{v_!\ \text{(Thm.~\ref{thm:quillen-cubical-comparison})}}"]
  \arrow[r, shift left=1.1ex,
    "{u_a^\ast=C_\ast^a\ \text{(Thm.~\ref{thm:quillen-omegaa-ssimp})}}"]
&
\Ch_{\geq -1}(k)
  \arrow[d, "{\simeq\ \text{degree shift}}"]
  \arrow[l, shift left=1.1ex,
    "{u_{a,!}\ \text{(Thm.~\ref{thm:quillen-omegaa-ssimp})}}"]
\\
&
\rmod{\Delta_{\inj}}
  \arrow[r, shift left=1.1ex,
    "{u_\Delta^\ast\ \text{(Thm.~\ref{thm:quillen-omega-ssimp})}}"]
&
\Ch_{\geq 0}(k)
  \arrow[l, shift left=1.1ex,
    "{u_{\Delta,!}\ \text{(Thm.~\ref{thm:quillen-omega-ssimp})}}"]
\end{tikzcd}
\]
The two solid horizontal adjunctions are Quillen equivalences. The dashed horizontal adjunction is
Quillen but not a Quillen equivalence, by
Remark~\ref{rem:quillen-cubical-not-equivalence}. The left vertical arrow denotes the arboreal
Quillen equivalence imported from \cite{Kaygun2026}. The right vertical arrow is the tautological
Quillen equivalence obtained by reindexing degrees: $\Ch_{\geq 0}(k)$ and $\Ch_{\geq -1}(k)$
are canonically isomorphic as model categories after shifting the homological indexing by one.

\subsection*{Acknowledgements}

The author acknowledges the help of large language models for copy-editing the manuscript.

\bibliographystyle{amsalpha}
\bibliography{references}

\end{document}